\theoremstyle{plain}
\newtheorem{thm}{Theorem}[section]
\newtheorem{prop}[thm]{Proposition}
\theoremstyle{definition}
\newtheorem{rem}{Remark}
\newcommand{\neutralize}[1]{\expandafter\let\csname c@#1\endcsname\count@}
\DeclareMathOperator{\Spec}{{\rm  Spec}}
    \DeclareFontFamily{U}{wncy}{}
    \DeclareFontShape{U}{wncy}{m}{n}{<->wncyr10}{}
    \DeclareSymbolFont{mcy}{U}{wncy}{m}{n}
    \DeclareMathSymbol{\Sha}{\mathord}{mcy}{"58} 
\numberwithin{equation}{section}
\renewcommand{\i}{\mathrm{i}}
\DeclareSymbolFont{bbold}{U}{bbold}{m}{n}
\DeclareSymbolFontAlphabet{\mathbbold}{bbold}
\newcommand{\revision}[1]{{\color{black}#1}}
\newcommand{\revisiontwo}[1]{{\color{black}#1}}
\newcommand{\tpmod}[1]{{\@displayfalse\pmod{#1}}}
\begin{document}

\title{\revision{Composed solutions of synchronized patterns in multiplex networks of Kuramoto oscillators}}

\author{Priya B. Jain}
\thanks{equal contribution}
\author{Tung T. Nguyen}
\thanks{equal contribution}
\author{J\'an Min\'a{\v c}}
\author{Lyle E. Muller}
\author{Roberto C. Budzinski}
\email{rbudzins@uwo.ca}
\affiliation{Department of Mathematics, Western University, London, N6A 3K7 ON, Canada}
\affiliation{Western Institute for Neuroscience, Western University, London, N6A 3K7 ON, Canada}
\affiliation{Western Academy for Advanced Research, Western University, London, N6A 3K7 ON, Canada}

\begin{abstract}
Networks with different levels of interactions, including multilayer and multiplex networks, can display a rich diversity of dynamical behaviors and can be used to model and study a wide range of systems. Despite numerous efforts to investigate these networks, obtaining mathematical descriptions for the dynamics of multilayer and multiplex systems is still an open problem. Here, we combine ideas and concepts from linear algebra and graph theory with nonlinear dynamics to offer a novel approach to study multiplex networks of Kuramoto oscillators. Our approach allows us to study the dynamics of a large, multiplex network by decomposing it into two smaller systems: one representing the connection scheme within layers \revision{(intra-layer)}, and the other representing the connections between layers \revision{(inter-layer)}. \revision{Particularly, we use this approach to compose solutions for multiplex networks of Kuramoto oscillators. These solutions are given by a combination of solutions for the smaller systems given by the intra and inter-layer system and, in addition, our approach allows us to study the linear stability of these solutions.}
\end{abstract}

\maketitle

\begin{quotation}
Networks of nonlinear oscillators offer the possibility to model and study many natural systems. The pattern of connections and the coupling structure play a crucial role in the emergent dynamics in these systems. In this context, multilayer and multiplex networks of nonlinear oscillators depict a rich diversity of synchronization patterns. At the same time, the sophisticated connectivity patterns in these systems bring an intrinsic difficulty to the mathematical analyses of the dynamics. Here, we introduce a mathematical approach for multiplex networks of nonlinear oscillators where we can compose solutions with nontrivial patterns of oscillations and study their linear stability.
\end{quotation}

\section{Introduction}\label{sec:introduction}

Systems composed of coupled units have been used to model and study a diversity of phenomena in nature spanning from physics \cite{boccaletti2002synchronization,arenas2008synchronization} and engineering \cite{motter2013spontaneous,schafer2018dynamically}, to social science \cite{jusup2022social,borgatti2009network}, to biology \cite{banerjee2016chimera,perez2020ecology} and neuroscience \cite{deco2009key,bassett2018nature}. In this context, many systems have different levels of interactions, which can be understood as multilayer networks \cite{kivela2014multilayer,de2013mathematical}. In this case, the whole system can be understood as the composition of an internal level, within each layer, and an external level, between layers. This class of system can be visualized as a network of networks, and it has many direct applications \cite{salehi2015spreading,menichetti2016control,de2016physics,aleta2019multilayer,bassett2017network,yuvaraj2021topological,medeiros2021asymmetry}. A particular example of this kind of network is given by multiplex networks, which has received great attention in the past years \cite{gomez2013diffusion,battiston2014structural,gomez2012evolution,nicosia2015measuring,kouvaris2015pattern}. A multiplex network can be understood as a network with many layers, where each layer has the same number of nodes connected through a given internal connection scheme, and the connection between nodes in different layers is given by a one-to-one scheme, where a node in a given layer is connected to nodes in neighboring layers that are in the same relative position within the layer. Multiplex networks have been studied in many different contexts, where rich dynamics have been found \cite{gambuzza2015intra,leyva2018relay,leyva2017inter,singh2015synchronization,schulen2021solitary}. However, despite the efforts in past years and the advances on the investigation of this class of networks, there are many open questions, mainly regarding mathematical and analytical approaches to study the dynamics of multilayer and multiplex networks.

Multilayer and multiplex networks can display a great diversity of synchronization phenomena. For instance, first-order transition, or explosive synchronization has been reported in these networks \cite{kumar2020interlayer,nicosia2017collective,jalan2019inhibition}. Furthermore, different synchronization patterns, including chimera states, have been observed  \cite{ghosh2016emergence,sawicki2018delay,ghosh2016birth,rybalova2021interplay}. In this paper, we introduce an approach to study multiplex networks, where we leverage recent results from graph theory and linear algebra \cite{doan2022joins}. We recently proposed a mathematical approach to study the dynamical behavior of oscillators on multilayer networks where each node in a given layer is connected to all other oscillators in the neighbor layers \cite{nguyen2023broadcasting}. \revision{In this paper, we use similar ideas to introduce a novel approach to study multiplex networks. Differently from the previous paper, here, we consider a network of networks where each node in a given layer is connected to only one node in the neighboring layers. Further, our results can be applied to networks with heterogeneous intrinsic properties for different nodes.} The approach that we explore here considers a multiplex network as a decomposition into the intra-layer and the inter-layer structures. We remark that similar ideas have been proposed for different multi-level systems \cite{kivela2014multilayer,boccaletti2014structure,gao2012networks}. For instance, a related approach in this context is explored in \cite{berner2021multiplex}, where multilayer networks are decomposed, which allows for the studying of the master stability function of these systems with application to spiking neural networks.

Our article focuses on multiplex networks of nonlinear oscillators. The dynamics on these networks are described by the Kuramoto model, a traditional dynamical system used to study many synchronization phenomena \cite{acebron2005kuramoto,rodrigues2016kuramoto,strogatz2000kuramoto}. Multilevel systems of Kuramoto oscillators have been extensively studied in the past years, where a rich diversity of dynamics has been observed \cite{kumar2021explosive,jalan2019explosive,frolov2018macroscopic,khanra2018explosive,zhang2015explosive}. The mathematical framework we introduce here gives us novel insights into the dynamics of multiplex Kuramoto networks, which can now be studied in simpler terms. Here, multiplex networks are composed of $M$ layers with $N$ oscillators in each one. In this case, our approach indicates that, instead of studying a large system, composed of $MN$ units with different levels of interaction, we can decompose the multiplex network into two smaller systems: the "intra-layer" system composed of $N$ units; and the "inter-layer" system composed of $M$ units. With this, we can study the dynamics of these smaller systems to obtain insights into the dynamics of the whole system. Particularly, our framework allows us to obtain the trajectories of Kuramoto oscillators on a multiplex network by only studying the smaller systems. Further, our approach allows us to use the solutions for the intra and inter-layer systems to compose solutions for the multiplex one, which offers a new perspective on the equilibrium points for multiplex networks of nonlinear oscillators. We can also obtain the collective dynamics of the multiplex network, i.e. the Kuramoto order parameter, by using the same idea. Lastly, this approach allows us to obtain insights into the linear stability of the solutions on the multiplex network by analyzing the spectral properties of the matrices related to the smaller systems. 

Here, we first introduce a new perspective on multiplex networks using certain constructions in graph theory (Sec. \ref{sec:product}). We then discuss the Kuramoto model and our approach for Kuramoto oscillators on multiplex networks (Sec. \ref{sec:kuramoto_multiplex}), which allows us to study the dynamics of multiplex networks in simpler terms (Sec. \ref{sec:composed solution}). We extend this approach to the analysis of the stability of equilibrium points in multiplex networks (Sec. \ref{sec:stability_analysis}). At last, we display several numerical simulations of Kuramoto oscillators on multiplex networks, highlighting the diversity of dynamical behavior they show and the applicability of our approach (Sec. \ref{sec:simulations}). The discussions and conclusions are in Sec. \ref{sec:discussion_conclusion}, and all computational details can be found in the appendix.

\section{Kronecker sum and representation of a multiplex network} \label{sec:product}

Our approach focuses on multiplex networks, i.e. the connection between nodes in different layers is given by a one-to-one scheme, where the node $i$ in layer $l$ is connected to node $i$ in layer $k$. A schematic example is shown in Fig. \ref{fig:example_multiplex} where two layers are considered.
\begin{figure}[htb]
    \centering
    \includegraphics[width=0.45\textwidth]{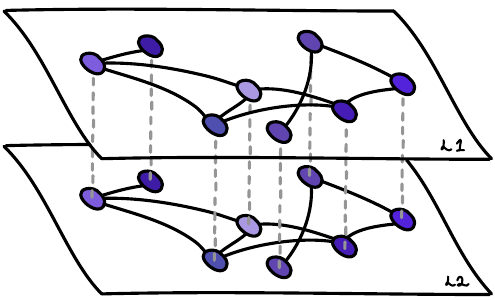}
    \caption{\textbf{Representation of a multiplex network.} In this kind of system, different layers are interconnected. Within a given layer, the oscillators have an "intra-layer" connection scheme; between layers, the oscillators are connected in a one-to-one scheme, which characterizes the "inter-layer" connection structure.}
    \label{fig:example_multiplex}
\end{figure}

A key observation here is that we can represent a multiplex network as the Cartesian product of two graphs. Let us first recall this concept from graph theory. Let $G$ and $H$ be two graphs. The Cartesian product operation forms a graph $G\boxtimes H$ of order $MN$ from a graph $G = (U, E)$ of order $M$ and a graph $H = (V, F)$ of order  $N$ \revision{ in the following way \cite{CRS}:}
\begin{itemize}
    \item \revision{The vertex set of $G\boxtimes H$ are ordered pairs $(u, v)$, where $u\in U$ and $v\in V$.}
    \item \revision{The vertices $(u, v)$ and $(u',v')$ are connected when:}
    \begin{itemize}
        \item \revision{$u = u'$ and $v$ and $v'$ are connected in $H$, or}
        \item \revision{$v = v'$ and $u$ and $u'$ are connected in $G$.}
    \end{itemize}
\end{itemize}
In other words, $G\boxtimes H$ is formed by replacing each vertex of $G$ by a copy of $H$, and replacing each edge of $G$ by edges between corresponding vertices of the appropriate copies. 

With this perspective, we can represent the adjacency matrix of $G \boxtimes H$ in a rather explicit way using some concepts from linear algebra and matrix theory which we now recall. For any positive integers $p, q, r, s$ we
define the Kronecker product of two matrices $\bm{A} \in \mathbb{R}^{p\times q}$ and $\bm{B} \in \mathbb{R}^{r\times s}$ as a
matrix $\bm{C} \in \mathbb{R}^{pr\times qs}$ given in block form as
\begin{equation}
\bm{C}=\left(\begin{array}{c|c|c|c}
\bm{A}b_{11} & \bm{A}b_{12} & \cdots & \bm{A}b_{1s} \\
\hline
\bm{A}b_{21} & \bm{A}b_{22} & \cdots & \bm{A}b_{2s} \\
\hline
\vdots & \vdots & \ddots & \vdots \\
\hline
\bm{A}b_{r1} & \bm{A}b_{r2} & \cdots & \bm{A}b_{rs}
\end{array}\right),
\label{eq:matrix_Kronecker_product}
\end{equation}
where $\bm{A}=[a_{ij}]$ and $\bm{B}=[b_{ij}]$. We denote the Kronecker product of $\bm{A}$ and $\bm{B}$ by $\bm{C}=\bm{A}\otimes \bm{B}$. For positive integers $r, s, k$, let $\bm{A} \in \mathbb{R}^{r \times r}$, $\bm{B} \in \mathbb{R}^{s \times s}$, and $\bm{I}_k$ be the identity matrix of order $k$. The sum $\bm{A} \otimes \bm{I}_s + \bm{I}_r \otimes \bm{B}$ is known as the Kronecker sum of $\bm{A}$ and $\bm{B}$. We denote the Kronecker sum of $\bm{A}$ and $\bm{B}$ by $\bm{A} \bigoplus \bm{B}$. 

By definition, the adjacency matrix of $G\boxtimes H$ is $\bm{A}\bigoplus \bm{B}$, where $\bm{A}$ is the adjacency matrix of $G$ and $\bm{B}$ is the adjacency matrix of $H$ \cite{CRS}. It is well-known that graph spectra behave well under the Cartesian product. Specifically, the spectrum of $\bm{A}\bigoplus \bm{B}$ is $\{\alpha_i + \beta_j \}_{1 \leq i \leq M, 1 \leq j \leq N}$ where $\{\alpha_i \}_{i=1}^r$ is the spectrum of $\bm{A}$  and $\{\beta_j \}_{j=1}^s$ is the spectrum of $\bm{B}$. \revision{Namely,}
\begin{equation}
\revision{
    \Spec\big(\bm{A}\bigoplus \bm{B}\big) =  \Spec(\bm{A})+\Spec(\bm{B}).}
    \label{eq:kronecker_sum_spec}
\end{equation}
Furthermore, the eigenvectors of $\bm{A}\bigoplus \bm{B}$ associated with ${\alpha_i + \beta_j}$ are Kronecker products of the corresponding eigenvectors of $\bm{A}$ and $\bm{B}$ \cite{CRS}. \revision{We note that similar graph theory approaches have been considered in the context of networked systems \cite{berner2021multiplex, CRS, nguyen2023broadcasting}. In our paper, we use these ideas to further develop the study of multiplex networks of Kuramoto oscillators.}

\section{Kuramoto oscillators on multiplex networks}\label{sec:kuramoto_multiplex}

In this paper, we focus on networks of nonlinear oscillators, where the dynamical behavior of each node is given by the Kuramoto model  \cite{rodrigues2016kuramoto,acebron2005kuramoto}
\begin{equation}
\frac{d\theta_i(t)}{dt} = \revision{\omega} + \sum_{j=1}^{\mathcal{N}} \mathbf{K}_{ij} \sin( \theta_j(t) - \theta_i(t) ).
\label{eq:KM_main}
\end{equation}
Here, $\theta_{i}(t) \in [-\pi, \, \pi] $ is the phase of the $i^{\mathrm{th}}$ oscillator at time $t$, \revision{$\omega$} is its natural frequency, $\mathcal{N}$ is the number of oscillators in the system, and \revision{ $\mathbf{K}_{ij}$ are elements of the adjacency matrix representing the connection strength from the $i^{\mathrm{th}}$ oscillator to the $j^{\mathrm{th}}$ oscillator. We focus on multiplex networks on which the intra-layer coupling strength is independent of the layer and assume that the natural frequency is constant for all the oscillators in the system.} 

\revision{For multiplex networks}, we can divide the connection structure into two different categories: intra-layer (within each layer) and inter-layer (between layers). \revision{Let us consider a system with $M$ layers and $N$ oscillators in each layer. For $l,k \in [1, \, M]$, we define $N \times N$ matrices $\bm{A}_{lk}$ whose $ij^{\mathrm{th}}$ entry $(\bm{A}_{lk})_{ij}$ is the connection strength from the $i^{\mathrm{th}}$ oscillator in layer $l$ to the $j^{\mathrm{th}}$ oscillator in layer $k$. In particular, the matrices $\bm{A}_{ll}$ represent the intra-layer coupling strength connection between oscillators in layer $l$ and the matrices $\bm{A}_{lk}$ for $l$ $\neq$ $k$ represent the inter-layer connection strength from layer $l$ to layer $k$. We also define a vector}
\begin{equation}
\revision{\bm{\theta}_l(t) = \big[(\bm{\theta}_{l})_{1}(t), (\bm{\theta}_{l})_{2}(t), \cdots, (\bm{\theta}_{l})_{N}(t)\big],}
\end{equation}
\revision{where $(\bm{\theta}_{l})_{i}(t)$ gives the phase of the $i^{\mathrm{th}}$ oscillator in the  $l^{\mathrm{th}}$ layer at time $t$.} 
With this in mind, we can now write the equation for the dynamics of each node \revision{ in a system } as:
\begin{equation}
    \frac{d(\bm{\theta}_{l})_{i}(t)}{dt} = \revision{\omega} + \underbrace{ \sum_{j=1}^{N} (\bm{A}_{ll})_{ij} \sin( (\bm{\theta}_{l})_{j}(t) - (\bm{\theta}_{l})_{i}(t) )}_{\mathrm{intra-layer}} + \underbrace{\sum_{k=1, k\neq l}^{M}\sum_{j=1}^{N} (\bm{A}_{lk})_{ij} \sin( (\bm{\theta}_{k})_{j}(t) - (\bm{\theta}_{l})_{i}(t) )}_{\mathrm{inter-layer}},
    \label{eq:multiplex_kuramoto}
\end{equation}
\revision{where $\omega$ is the natural frequency of all oscillators.}

We can still represent the multiplex network using a similar form \revision{as} in Eq. (\ref{eq:KM_main}). In this case, the information about intra-layer coupling and inter-layer coupling is represented in a single \revision{$NM \times NM$ matrix $K$ such that}
\begin{equation}
\revision{\bm{K}=\left(\begin{array}{c|c|c|c}
\bm{A}_{11} & \bm{A}_{12} & \cdots & \bm{A}_{1M} \\
\hline
\bm{A}_{21} & \bm{A}_{22} & \cdots & \bm{A}_{2M} \\
\hline
\vdots & \vdots & \ddots & \vdots \\
\hline
\bm{A}_{N1} & \bm{A}_{N2} & \cdots & \bm{A}_{NM}
\end{array}\right),}
\label{eq:matrix_multiplex}
\end{equation}
\revision{where the matrices $\bm{A}_{lk}$ are the same as introduced in Eq. (\ref{eq:multiplex_kuramoto}). Then the equation of the system turns out to be}
\begin{equation}
\frac{d\theta_i}{dt} = \omega + \sum_{j=1}^{NM} \mathbf{K}_{ij} \sin( \theta_j - \theta_i ),
\label{eq:KM_multiplex}
\end{equation}
where now $i \in [1,\, NM]$, such that:
\begin{equation}
    \bm{\theta} = (\underbrace{\theta_{1}, \theta_{2}, \cdots, \theta_{N}}_{1^{\mathrm{st}} \mathrm{layer}}, \underbrace{\theta_{N+1}, \theta_{N+2}, \cdots, \theta_{2N}}_{2^{\mathrm{nd}} \mathrm{layer}}, \cdots, \underbrace{\theta_{N(M-1) + 1}, \theta_{N(M-1) +2}, \cdots, \theta_{NM}}_{M^{\mathrm{th}} \mathrm{layer}}).
    \label{eq:theta_multiplex}
\end{equation}

\revision{The network being multiplex forces the off-diagonal blocks $\bm{A}_{lk}$ for $l \neq k$ to be scalar matrices. In particular, they are of the form}
\begin{equation}
\revision{
   \bm{A}_{lk} = \epsilon_{lk}I_{N}},
   \label{eq:matrix_off-diag_block}
\end{equation}
\revision{ where $\epsilon_{lk}$ represents the coupling strength between oscillators in layers $l$ and $k$ and $I_{N}$ is the $N \times N$ identity matrix.} This assumption means that, for any pair of layers, each oscillator in the first layer is only connected with the corresponding oscillator of the other layer. \revision{Moreover, since we have assumed that the intra-layer coupling strength is independent of the layers, all the diagonal blocks $\bm{A}_{ll}$ are identical. So, for all layers $l$, we have}
\begin{equation}
    \revision{\bm{A}_{ll}= }
    \revision{
\begin{pmatrix}
0 & a_{12} & \cdots & a_{1N} \\
a_{21} & 0 & \cdots & a_{2N} \\
\vdots  & \vdots  & \ddots & \vdots  \\
a_{N1} & a_{N2} & \cdots & 0 
\end{pmatrix}},
\label{eq:matrix_diag_block}
\end{equation}
\revision{where $a_{ij}$ represents the weight of the connection from oscillators $i$ to oscillator $j$ in each layer and no self-connections are considered.}

We can compose solutions for multiplex networks of Kuramoto oscillators using the ideas explained in the previous sections, combined with results in graph theory \cite{doan2022joins, CRS}. This is valid for equilibrium points and also for considering the transient behavior.
\revision{To do so, we define two new systems of Kuramoto oscillators representing the intra-layer and inter-layer connections. The intra-layer system consists of $N$ oscillators with equation:}
\begin{equation}
\revision{\frac{d\psi_i(t)}{dt} = \omega_{\mathrm{intra}} + \sum_{j=1}^{N} (\bm{A}_{\mathrm{intra}})_{ij} \sin( \psi_j(t) - \psi_i(t) ),}
\label{eq:KM_intra}
\end{equation}
\revision{where $\omega_{\mathrm{intra}}$ represents the natural frequency for this system and} 
\begin{equation}
    \revision{\bm{A}_{\mathrm{intra}}= 
\begin{pmatrix}
0 & a_{12} & \cdots & a_{1N} \\
a_{21} & 0 & \cdots & a_{2N} \\
\vdots  & \vdots  & \ddots & \vdots  \\
a_{N1} & a_{N2} & \cdots & 0 
\end{pmatrix},}
\label{eq:matrix_intra_system}
\end{equation}
\revision{where $a_{ij}$ are as in Eq. (\ref{eq:matrix_diag_block}). On the other hand, the inter-layer system consists of $M$ oscillators with equation}
\begin{equation}
\revision{\frac{d\phi_l(t)}{dt} = \omega_{\mathrm{inter}} + \sum_{k=1}^{M} (\bm{A}_{\mathrm{inter}})_{lk} \sin( \phi_k(t) - \phi_l(t) ),}
\label{eq:KM_inter}
\end{equation}
\revision{where $\omega_{\mathrm{inter}}$ represents the natural frequency for this system and}
\begin{equation}
    \revision{\bm{A}_{\mathrm{inter}}= 
\begin{pmatrix}
0 & \epsilon_{12} & \cdots & \epsilon_{1M} \\
\epsilon_{21} & 0 & \cdots & \epsilon_{2M} \\
\vdots  & \vdots  & \ddots & \vdots  \\
\epsilon_{1M} & \epsilon_{2M} & \cdots & 0 
\end{pmatrix},}
\label{eq:matrix_inter_system}
\end{equation}
\revision{where $\epsilon_{lk}$ are as in Eq. (\ref{eq:matrix_off-diag_block}).}

As explained in the previous section, we can now use the Kronecker sum of these two matrices and represent all connections between oscillators in the multiplex system as:
\begin{equation}
\revision{\bm{K}} = \bm{A}_{\mathrm{intra}} \bigoplus \bm{A}_{\mathrm{inter}} =
\left(\begin{array}{c|c|c|c}
\bm{A}_{\mathrm{intra}} & \epsilon_{12}\bm{I}_{N} & \cdots & \epsilon_{1M}\bm{I}_{N} \\
\hline
\epsilon_{21}\bm{I}_{N} & \bm{A}_{\mathrm{intra}} & \cdots & \epsilon_{2M}\bm{I}_{N} \\
\hline
\vdots & \vdots & \ddots & \vdots \\
\hline
\epsilon_{M1}\bm{I}_{N} & \epsilon_{M2}\bm{I}_{N} & \cdots & \bm{A}_{\mathrm{intra}}
\end{array}\right),\\ 
\label{eq:matrix_merged_multiplex}
\end{equation}
\revision{where $\bm{K}$ is as in Eq. (\ref{eq:matrix_multiplex})}. In addition to that, throughout the paper, we consider two different coupling strengths that act as scalars being multiplied by the matrices $\bm{A}_{\mathrm{intra}}$ and $\bm{A}_{\mathrm{inter}}$, which are $\epsilon_{\mathrm{intra}}$ and $\epsilon_{\mathrm{inter}}$, respectively. These parameters can be interpreted as the coupling strength for the intra-layer system and inter-layer system, which can be always absorbed into the adjacency matrices.

\section{Composition of solutions in multiplex networks}\label{sec:composed solution}

With the perspective described in the previous section, we can study a multiplex network composed of $M$ layers, each one with $N$ oscillators. In this framework, we can consider the case where the connectivity of each layer is identical and arbitrary. So, we can study the large and sophisticated multiplex system with $NM$ oscillators, by studying the behavior of simpler, smaller systems: a network with $N$ oscillators and connectivity given by the intra-layer connection scheme $\bm{A}_{\mathrm{intra}}$ as described in Eq. (\ref{eq:matrix_intra_system}), and a network with $M$ oscillators and connectivity given by the inter-layer connection scheme $\bm{A}_{\mathrm{inter}}$ as described in Eq. (\ref{eq:matrix_inter_system}). With this, we can obtain solutions and the transient dynamical behavior of the multiplex network by studying the behavior of the intra-layer and inter-layer representations.

This composition procedure can be summarized in the following proposition:
\begin{prop} 
\label{prop:composed solution}
Let 
\begin{equation}
    {\bm{\psi}}^{\ast} = ({\psi}_{1}^{\ast}, {\psi}_{2}^{\ast}, \cdots, {\psi}_{N}^{\ast}),
    \label{eq:solution_intra_system}
\end{equation} 
and
\begin{equation}
    {\bm{\phi}}^{\ast} = ({\phi}_{1}^{\ast}, {\phi}_{2}^{\ast}, \cdots, {\phi}_{M}^{\ast}),
    \label{eq:solution_inter_system}
\end{equation}
be solutions of two single layered networks given by matrices $\bm{A}_{\mathrm{intra}}$ and $\bm{A}_{\mathrm{inter}}$, whose dynamics is represented by Eqs. (\ref{eq:KM_intra}) and (\ref{eq:KM_inter}), respectively. Then, \revision{the Eq. (\ref{eq:multiplex_kuramoto})
with $\bm{A}_{ll}$ = $\bm{A}_{\mathrm{intra}}$ for all $l$, $\bm{A}_{lk}$ =  $(\bm{A}_{\mathrm{inter}})_{lk} \bm{I}_{N}$ and $\omega$ = $\omega_{\mathrm{inter}} + \omega_{\mathrm{intra}}$,}
\begin{equation}
\revision{
\begin{split}
    \frac{d(\bm{\theta}_{l})_{i}(t)}{dt} = (\omega_{\mathrm{inter}} + \omega_{\mathrm{intra}}) + \underbrace{ \sum_{j=1}^{N} (\bm{A}_{\mathrm{intra}})_{ij} \sin( (\bm{\theta}_{l})_{j}(t) - (\bm{\theta}_{l})_{i}(t) )}_{\mathrm{intra-layer}}\\ + \underbrace{\sum_{k=1, k\neq l}^{M}\sum_{j=1}^{N} ((\bm{A}_{\mathrm{inter}})_{lk} \bm{I}_{N})_{ij} \sin( (\bm{\theta}_{k})_{j}(t) - (\bm{\theta}_{l})_{i}(t) )}_{\mathrm{inter-layer}},
\end{split}
}
\end{equation}
 admits a solution 
\[
\bm{\theta^*}_l(t) = \big[(\bm{\theta^*}_{l})_{1}(t), (\bm{\theta^*}_{l})_{2}(t), \cdots, (\bm{\theta^*}_{l})_{N}(t)\big] \mathrm{ ,where} \  l = 1, 2, \cdots M,
\]

such that
\begin{equation}
(\bm{\theta}^{*}_{l})_{i} = \psi_{i}^{*} + \phi_{l}^{*}         
\label{eq:solution_composed_layer}
\end{equation}

Expressing in terms of Eq.(\ref{eq:KM_multiplex}) and (\ref{eq:theta_multiplex}),
\begin{equation}
    \bm{\theta}^{\ast} = (\underbrace{{\psi}_{1}^{\ast}+{\phi}_{1}^{\ast}, {\psi}_{2}^{\ast}+{\phi}_{1}^{\ast}, \cdots, {\psi}_{N}^{\ast}+{\phi}_{1}^{\ast}}_{1^{\mathrm{st}}\mathrm{layer}}, \underbrace{{\psi}_{1}^{\ast}+{\phi}_{2}^{\ast}, {\psi}_{2}^{\ast}+{\phi}_{2}^{\ast}, \cdots, {\psi}_{N}^{\ast}+{\phi}_{2}^{\ast}}_{2^{\mathrm{nd}} \mathrm{layer}}, \cdots, \underbrace{{\psi}_{1}^{\ast}+{\phi}_{M}^{\ast}, {\psi}_{2}^{\ast}+{\phi}_{M}^{\ast}, \cdots, {\psi}_{N}^{\ast}+{\phi}_{M}^{\ast}}_{M^{\mathrm{th}} \mathrm{layer}}),
\label{eq:solution_multiplex_composed}
\end{equation}
is a solution of the multiplex network \revision{ given by equation }
\begin{equation}
\revision{
\frac{d\theta_i}{dt} = (\omega_{\mathrm{inter}} + \omega_{\mathrm{intra}}) + \sum_{j=1}^{NM} \bm{K}_{ij} \sin( \theta_j - \theta_i ),}
\label{eq:KM_multiplex_composed}
\end{equation}
\revision{where }

$$\revision{\bm{K}} = \bm{A}_{\mathrm{intra}} \bigoplus \bm{A}_{\mathrm{inter}}.$$
\end{prop}

\begin{proof}
By definition, we are given that   
\begin{equation}
    \frac{d\psi_i^{\ast}}{dt} = \revision{\omega_{\mathrm{intra}}} + \sum_{j=1}^{N} (\bm{A}_{\mathrm{intra}})_{ij} \sin( \psi_j^{\ast} - \psi_i^{\ast} ),
    \label{eq:KM_intra_proof}
\end{equation}
and 
\begin{equation}
    \frac{d\phi_l^{\ast}}{dt} = \revision{\omega_{\mathrm{inter}} +} \sum_{k=1}^{M} (\bm{A}_{\mathrm{inter}})_{lk} \sin( \phi_k^{\ast} - \phi_l^{\ast} ).
    \label{eq:KM_inter_proof}
\end{equation}
We have 
\begin{equation}
    \frac{d(\psi_i^{\ast} +\phi_{l}^{\ast})}{dt} =  \frac{d\psi_i^{\ast}}{dt}+\frac{d\phi_l^{\ast}}{dt}.
\end{equation}
Here, we can use Eqs. (\ref{eq:KM_intra_proof}), (\ref{eq:KM_inter_proof}) and (\ref{eq:solution_composed_layer}), which leads to
\begin{equation}
  \revision{\frac{d((\bm{\theta}^{*}_{l})_{i})}{dt}} = \frac{d(\psi_i^{\ast} +\phi_{l}^{\ast})}{dt} = \revision{(\omega_{\mathrm{intra}} + \omega_{\mathrm{inter}}) +}\sum_{j=1}^{N} (\bm{A}_{\mathrm{intra}})_{ij} \sin {( \psi_j^{\ast} - \psi_i^{\ast} )}
+ \sum_{k=1}^{M} (\bm{A}_{\mathrm{inter}})_{lk} \sin {( \phi_k^{\ast} - \phi_l^{\ast} )}. 
\label{eq:proof_4}
\end{equation}
\revision{Using the fact that for $k \neq l$}
\[ \revision{((\bm{A}_{\mathrm{inter}})_{lk}\revision{I_{N}})_{ij} = \begin{cases}
    (\bm{A}_{\mathrm{inter}})_{lk} & \text{if } i = j  \\
   0& \text{else,}
\end{cases}} \]
\revision{we can see that the Eq. (\ref{eq:proof_4}) can be written as}
\begin{equation}
\begin{split}
    \revision{\frac{d((\bm{\theta}^{*}_{l})_{i})}{dt}} = \revision{(\omega_{\mathrm{intra}} + \omega_{\mathrm{inter}}) +} \underbrace{\sum_{j=1}^{N} (\bm{A}_{\mathrm{intra}})_{ij} \sin{\Big( (\psi_j^{\ast} - \psi_i^{\ast})+(\phi_{l}^{\ast}-\phi_{l}^{\ast})\Big)}}_{\mathrm{intra-layer}} + \\ \underbrace{\sum_{k=1, k\neq l}^{M}\sum_{j=1}^{N} ((\bm{A}_{\mathrm{inter}})_{lk}\revision{I_{N}})_{ij} \sin{\Big( (\phi_k^{\ast}-\phi_l^{\ast})
    + (\psi_j^{\ast}-\psi_j^{\ast} )\Big)}}_{\mathrm{inter-layer}},
    \end{split}
\end{equation}
\revision{where $\bm{I}_N$ be the identity matrix of order $N$. We can now write}
\begin{equation}
\begin{split}
    \revision{\frac{d((\bm{\theta}^{*}_{l})_{i})}{dt}} = \revision{(\omega_{\mathrm{intra}} + \omega_{\mathrm{inter}}) +} \underbrace{\sum_{j=1}^{N} (\bm{A}_{\mathrm{intra}})_{ij} \sin{\Big( (\psi_j^{\ast}+\phi_{l}^{\ast}) - (\psi_i^{\ast}+\phi_{l}^{\ast} )\Big)}}_{\mathrm{intra-layer}} + \\ \underbrace{\sum_{k=1, k\neq l}^{M}\sum_{j=1}^{N} ((\bm{A}_{\mathrm{inter}})_{lk}\revision{\bm{I}_{N}})_{ij} \sin{\Big( (\psi_j^{\ast}+\phi_k^{\ast})
    - (\psi_i^{\ast}+\phi_l^{\ast} )\Big)}}_{\mathrm{inter-layer}}.
\end{split}
\end{equation}
or
\begin{equation}
\begin{split}
   \revision{\frac{d((\bm{\theta}^{*}_{l})_{i})}{dt} =(\omega_{\mathrm{inter}} + \omega_{\mathrm{intra}})} + \underbrace{ \sum_{j=1}^{N} (\bm{A}_{\mathrm{intra}})_{ij} \sin( (\bm{\theta}_{l})_{j}(t) - (\bm{\theta}_{l})_{i}(t) )}_{\mathrm{intra-layer}}\\ + \underbrace{\sum_{k=1, k\neq l}^{M}\sum_{j=1}^{N} ((\bm{A}_{\mathrm{inter}})_{lk}\revision{\bm{I}_{N}})_{ij} \sin( (\bm{\theta}_{k})_{j}(t) - (\bm{\theta}_{l})_{i}(t) )}_{\mathrm{inter-layer}}.
\end{split} 
\end{equation}

With this, we prove that the solutions for the multiplex network are equivalent to the sum of the solutions for the intra and inter-layer systems.
\end{proof}

Furthermore, we can extend this analysis and characterize the dynamical behavior of a multiplex network using the Kuramoto order parameter, which quantifies in one single number the level of phase synchronization a given network has \cite{acebron2005kuramoto,rodrigues2016kuramoto}.
The Kuramoto order parameter for the multiplex system is defined as
\begin{equation}
    R(t) = \frac{1}{NM} \left|\sum\limits_{j=1}^{NM} \exp{(\i\theta_j(t))}\right|,
    \label{eq:order_parameter_multilayer}
\end{equation}
where $\theta(t)$ is given by Eq. (\ref{eq:KM_multiplex}). Here, $R(t) = 1$ means that all oscillators in all the layers have the same phase at a given time $t$, which is defined as phase synchronization. For the asynchronous behavior, $R$ assumes residual values. Using the same idea, we also measure the level of synchronization of the Kuramoto network on the inter-layer and intra-layer representations:
\begin{equation}
    R_{\mathrm{intra}}(t) = \frac{1}{N} \left|\sum\limits_{j=1}^{N} \exp{(\i \psi_j(t))}\right|,
    \label{eq:order_parameter_intra}
\end{equation}
and
\begin{equation}
    R_{\mathrm{inter}}(t) = \frac{1}{M} \left|\sum\limits_{j=1}^{M} \exp{(\i \phi_j(t))}\right|.
    \label{eq:order_parameter_inter}
\end{equation}

By the definition of the composed solution and direct calculations, we have the following proposition.
\begin{prop}\label{prop:order_parameter}
Suppose that $\bm{\psi}$ represents the behavior of the Kuramoto model on $\bm{A}_{\mathrm{intra}}$ -- given by Eq. (\ref{eq:KM_intra}) -- and $\bm{\phi}$ is a represents the behavior of the Kuramoto model on $\bm{A}_{\mathrm{inter}}$ -- given by Eq. (\ref{eq:KM_inter}). Further, $\bm{\theta}$ represents the dynamical behavior of the Kuramoto model on $\bm{K}$ -- the multiplex network which is represented by Eq. (\ref{eq:KM_multiplex}). Then the relation between their Kuramoto order parameters is as follows: 
\begin{equation}
 \revision{ R(t) = R_{\mathrm{intra}}(t) R_{\mathrm{inter}}(t).}
  \label{eq:order_parameter_composed}
\end{equation}

Here, $R_{\mathrm{intra}}(t)$ and $R_{\mathrm{inter}}(t)$ can be directly obtained through Eqs. (\ref{eq:order_parameter_intra}) and (\ref{eq:order_parameter_inter}), respectively.
\end{prop}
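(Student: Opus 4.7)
The plan is to compute $R(t)$ directly by plugging the composed solution from Proposition \ref{prop:composed solution} into the definition \eqref{eq:order_parameter_multilayer}, and then to factor the resulting double sum. Concretely, I would index the $NM$ oscillators of the multiplex network by pairs $(l,i)$ with $1 \leq l \leq M$ and $1 \leq i \leq N$, so that the composed phase reads $\theta_{(l,i)}(t) = \psi_i(t) + \phi_l(t)$. Everything then rests on the identity $\exp(\i(\psi_i + \phi_l)) = \exp(\i\psi_i)\exp(\i\phi_l)$, which turns a double sum over $(l,i)$ into a product of two single sums.

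The key steps, in order, are as follows. First, I would substitute the composed solution into \eqref{eq:order_parameter_multilayer} to obtain
\begin{equation}
R(t) \;=\; \frac{1}{NM}\left|\sum_{l=1}^{M}\sum_{i=1}^{N}\exp\bigl(\i(\psi_i(t)+\phi_l(t))\bigr)\right|.
\end{equation}
Second, using the exponential-of-sum identity and the fact that the inner sum over $i$ does not depend on $l$, I would factor this as
\begin{equation}
R(t) \;=\; \frac{1}{NM}\left|\left(\sum_{i=1}^{N}\exp(\i\psi_i(t))\right)\left(\sum_{l=1}^{M}\exp(\i\phi_l(t))\right)\right|.
\end{equation}
Third, I would apply multiplicativity of the complex modulus, $|z_1 z_2|=|z_1||z_2|$, to split the absolute value over the two factors. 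Fourth, I would regroup the normalizing constant as $\tfrac{1}{NM} = \tfrac{1}{N}\cdot\tfrac{1}{M}$ and recognize the two resulting factors as precisely $R_{\mathrm{intra}}(t)$ and $R_{\mathrm{inter}}(t)$ from \eqref{eq:order_parameter_intra} and \eqref{eq:order_parameter_inter}.

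I do not anticipate a serious obstacle here: the statement is essentially a computational consequence of the additive structure of the composed solution, combined with the multiplicative structure of the complex exponential and modulus. The only thing to be careful about is bookkeeping of the double index $(l,i)$ versus the single index used in \eqref{eq:theta_multiplex}, and making sure the correspondence matches the ordering prescribed by Proposition \ref{prop:composed solution}; once that is set up, the factorization is immediate and no further analysis is needed.
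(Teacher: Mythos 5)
Your proposal is correct and is exactly the ``direct calculation'' the paper itself invokes: substituting the composed phases $\theta_{(l,i)}(t)=\psi_i(t)+\phi_l(t)$ into Eq.~(\ref{eq:order_parameter_multilayer}), factoring the double sum via $\exp(\i(\psi_i+\phi_l))=\exp(\i\psi_i)\exp(\i\phi_l)$, and using multiplicativity of the modulus together with $\tfrac{1}{NM}=\tfrac{1}{N}\cdot\tfrac{1}{M}$. The only implicit hypothesis (which you and the paper share) is that $\bm{\theta}$ is indeed the composed trajectory of Prop.~\ref{prop:composed solution}, i.e.\ its initial condition is composed from those of $\bm{\psi}$ and $\bm{\phi}$, so the additive relation holds for all $t$.
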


\begin{rem}
\label{rem:natural_frequency}
\revisiontwo{We remark that our approach can be also applied for multiplex networks with heterogeneous natural frequency. In this case, we consider that the natural frequency of each oscillator in the multiplex network depends on the layer that the oscillator belongs to. So the natural frequency of the inter-layer system $\omega_{\mathrm{inter}}$ becomes a vector with $M$ entries, where the $l^{\mathrm{th}}$ entry describes the natural frequency of oscillators on layer $l$. Lastly, we set $\omega_{\mathrm{intra}} = 0$ without loss of generality. With this, the propositions presented in the section can be directly applied to multiplex networks with heterogeneous natural frequency. Specifically, the proof of Prop. \ref{prop:composed solution} shows that if ${\bm{\psi}}^{\ast} = ({\psi}_{1}^{\ast}, {\psi}_{2}^{\ast}, \cdots, {\psi}_{N}^{\ast})$ is a solution of the intra-layer system given by Eq. (\ref{eq:KM_intra}) with $\omega_{\mathrm{intra}} = 0$, and ${\bm{\phi}}^{\ast} = ({\phi}_{1}^{\ast}, {\phi}_{2}^{\ast}, \cdots, {\phi}_{M}^{\ast})$ is a solution of the inter-layer system given by Eq. (\ref{eq:KM_inter}) with heterogeneous natural frequency $\bm{\omega}_{\mathrm{inter}} = \big((\omega_{\mathrm{inter}})_1, (\omega_{\mathrm{inter}})_2, \cdots, (\omega_{\mathrm{inter}})_M\big)$, then the composed solution given by Eq. (\ref{eq:solution_multiplex_composed}) is a solution of the Kuramoto model on the multiplex given by}
\begin{equation}
\begin{split}
\revisiontwo{
    \frac{d(\bm{\theta}_{l})_{i}(t)}{dt} = (\omega_{\mathrm{inter}})_{l} + \underbrace{ \sum_{j=1}^{N} (\bm{A}_{\mathrm{intra}})_{ij} \sin( (\bm{\theta}_{l})_{j}(t) - (\bm{\theta}_{l})_{i}(t) )}_{\mathrm{intra-layer}}}\\ \revisiontwo{ + \underbrace{\sum_{k=1, k\neq l}^{M}\sum_{j=1}^{N} ((\bm{A}_{\mathrm{inter}})_{lk} \bm{I}_{N})_{ij} \sin( (\bm{\theta}_{k})_{j}(t) - (\bm{\theta}_{l})_{i}(t) )}_{\mathrm{inter-layer}}.}
\end{split}
\end{equation}
\revisiontwo{This shows that Prop. \ref{prop:composed solution} and Prop. \ref{prop:order_parameter} can be used in the case of heterogeneous natural frequency.}
\end{rem} 

\section{Stability of composed solutions in multiplex networks}\label{sec:stability_analysis}

In order to perform stability analysis for a given solution, one needs a more in-depth analysis of the Jacobian matrix. In this section, we study the Jacobian for the intra \revision{and inter-layer networks} in order to obtain information about the Jacobian of the multiplex network. \revision{We use our approach to obtain the composed equilibrium point $\bm{\theta}^{\ast}$ by using the equilibrium point $\bm{\psi}^{\ast}$ of the intra-layer system -- given by Eq. (\ref{eq:solution_intra_system}) and Eq. (\ref{eq:KM_intra}) respectively and using the equilibrium point $\bm{\phi}^{\ast}$ of the inter-layer system -- given by Eq. (\ref{eq:solution_inter_system}) and Eq. (\ref{eq:KM_inter}) respectively. In this case, we denote the Jacobian for the intra-layer system at $\bm{\psi}^{\ast}$  as $\bm{J}_{\bm{A}_{\mathrm{intra}}}(\bm{\psi}^{\ast})$, for the inter-layer system at $\bm{\phi}^{\ast}$ as $\bm{J}_{\bm{A}_{\mathrm{inter}}}(\bm{\phi}^{\ast})$, and the Jacobian for the multiplex system at the equilibrium point $\bm{\theta}^{\ast}$ as $\bm{J}(\bm{\theta}^{\ast}) = \bm{J}_{\bm{K}}(\bm{\theta}^{\ast})$.}

We first recall the matrices $\bm{A}_{\mathrm{intra}}$ and $\bm{A}_{\mathrm{inter}}$, which are defined in Eq. (\ref{eq:matrix_intra_system}) and Eq. (\ref{eq:matrix_inter_system}) respectively. Based on these matrices, we can now write the Jacobian for \revision{the intra-layer system at $\bm{\psi}^{\ast}$} as
\begin{equation}
\bm{J}_{\bm{A}_{\mathrm{intra}}}({\bm{\psi}}^{\ast})=\begin{pmatrix}
-\lambda_1 & a_{12}\cos({\psi}^{\ast}_{2}-{\psi}^{\ast}_{1}) & \cdots & a_{1N}\cos({\psi}^{\ast}_{N}-{\psi}^{\ast}_{1}) \\
a_{21}\cos({\psi}^{\ast}_{1}-{\psi}^{\ast}_{2}) & -\lambda_2 & \cdots & a_{2N}\cos({\psi}^{\ast}_{N}-{\psi}^{\ast}_{2}) \\
\vdots  & \vdots  & \ddots & \vdots  \\
a_{N1}\cos({\psi}^{\ast}_{1}-{\psi}^{\ast}_{N}) & a_{N2}\cos({\psi}^{\ast}_{2}-{\psi}^{\ast}_{N}) & \cdots &  -\lambda_{N} 
\end{pmatrix},
\label{eq:jacobian_reduced system}
\end{equation}
where
\begin{equation}
    \lambda_i =\sum_{j=1, j\neq i}^N a_{ij}\cos({\psi}^{\ast}_{j}-{\psi}^{\ast}_{i}),
\end{equation}
\revision{and the Jacobian for inter-layer system at $\bm{\phi}^{\ast}$ as}

\begin{equation}
\revision{
\bm{J}_{\bm{A}_{\mathrm{inter}}}({\bm{\phi}}^{\ast})=\begin{pmatrix}
-\mu_1 & \epsilon_{12}\cos({\phi}^{\ast}_{2}-{\phi}^{\ast}_{1}) & \cdots & \epsilon_{1M}\cos({\phi}^{\ast}_{M}-{\phi}^{\ast}_{1}) \\
\epsilon_{21}\cos({\phi}^{\ast}_{1}-{\phi}^{\ast}_{2}) & -\mu_2 & \cdots & \epsilon_{2M}\cos({\phi}^{\ast}_{M}-{\phi}^{\ast}_{2}) \\
\vdots  & \vdots  & \ddots & \vdots  \\
\epsilon_{M1}\cos({\phi}^{\ast}_{1}-{\phi}^{\ast}_{M}) & \epsilon_{M2}\cos({\phi}^{\ast}_{2}-{\phi}^{\ast}_{M}) & \cdots &  -\mu_{M} 
\end{pmatrix},}
\label{eq:jacobian_inter system}
\end{equation}
\revision{where}
\begin{equation}
    \revision{\mu_l =\sum_{k=1, k\neq l}^M \epsilon_{lk}\cos({\phi}^{\ast}_{k}-{\phi}^{\ast}_{l}).}
\end{equation}
We now compute the Jacobian $\bm{J}(\bm{\theta}^{\ast})$. \revision{We express $\bm{J}(\bm{\theta}^{\ast})$ as a block matrix}
\begin{equation}
\revision{
\bm{J}(\bm{\theta}^{\ast}) = \bm{J}_{\bm{K}}(\bm{\theta}^{\ast}) = \left(\begin{array}{c|c|c|c}
\bm{J}_{11}(\bm{\theta}^{\ast}) & \bm{J}_{12}(\bm{\theta}^{\ast}) & \cdots & \bm{J}_{1M}(\bm{\theta}^{\ast}) \\
\hline
\bm{J}_{21}(\bm{\theta}^{\ast}) & \bm{J}_{22}(\bm{\theta}^{\ast}) & \cdots & \bm{J}_{2M}(\bm{\theta}^{\ast}) \\
\hline
\vdots & \vdots & \ddots & \vdots \\
\hline
\bm{J}_{N1}(\bm{\theta}^{\ast}) & \bm{J}_{N2}(\bm{\theta}^{\ast}) & \cdots & \bm{J}_{NM}(\bm{\theta}^{\ast})
\end{array}\right),}
\label{eq:jacobian_matrix_multiplex}
\end{equation}

\revision{By definition, we have for $l$ $\neq$ $k$,}
\begin{equation}
\revision{
(\bm{J}_{lk}(\bm{\theta}^{\ast}))_{ij} = (\bm{A}_{lk})_{ij} \cos{\Big(\psi_{j}^{\ast}+\phi_k^{\ast}-\psi_{i }^{\ast}-\phi_l^{\ast}\Big)} =  
(\epsilon_{lk}\bm{I}_{\mathrm{N}})_{ij}\cos{\Big(\psi_{i}^{\ast}+\phi_k^{\ast}-\psi_{i }^{\ast}-\phi_l^{\ast}\Big)} =
(\epsilon_{lk}\bm{I}_{\mathrm{N}})_{ij}  \cos{(\phi_{k}^{\ast}-\phi_{l}^{\ast})}.}
\end{equation}

Similarly, for $l$ = $k$ and $i \neq j$, we have 
\begin{align}
(\bm{J}_{lk}(\bm{\theta}^{\ast}))_{ij} = (\bm{J}_{ll}(\bm{\theta}^{\ast}))_{ij} &=  (\bm{A}_{ll})_{ij} \cos{\Big(\psi_{j}^{\ast}+\phi_{l}^{\ast}-\psi_{i }^{\ast}-\phi_l^{\ast}\Big)} = (\bm{A}_{\mathrm{intra}})_{ij} \cos{\Big(\psi_{j }^{\ast}-\psi_{i}^{\ast}\Big)}. 
\end{align} 

Finally, we need to consider the case $i=j$ \revision{in layer $l$}. For this part, we observe that $\bm{J}(\bm{\theta^}{\ast})$ is a semi-magic square matrix with a line sum equal to zero. We can then see that
\begin{equation}
\revision{
(\bm{J}_{ll}(\bm{\theta}^{\ast}))_{ii}= -\lambda_i-\mu_{l} .}
\end{equation}
By combining these facts, we can write the Jacobian for the multiplex system at the equilibrium point $\bm{\theta}^{\ast}$ as:
\begin{equation}
\label{eq:jacobian_multilayer_system} 
\bm{J}(\bm{\theta}^{\ast}) = \bm{J}_{\bm{K}}(\bm{\theta}^{\ast}) = \left(\begin{array}{c|c|c|c}
\revision{\bm{J}_{\bm{A}_{\mathrm{intra}}}({\bm{\psi}}^{\ast}) - \mu_1 \bm{I}_N} & (\epsilon_{12}\revision{\bm{I}_N})  \cos(\phi_{2}^{\ast}-\phi_{1}^{\ast}) & \cdots & (\epsilon_{1M} \revision{\bm{I}_N})  \cos(\phi_{M}^{\ast}-\phi_{1}^{\ast})\\
\hline
(\epsilon_{21} \revision{\bm{I}_N})  \cos(\phi_{1}^{\ast}-\phi_{2}^{\ast}) & \revision{\bm{J}_{\bm{A}_{\mathrm{intra}}}({\bm{\psi}}^{\ast}) - \mu_2 \bm{I}_N} & \cdots &(\epsilon_{2M} \revision{\bm{I}_N})  \cos(\phi_{M}^{\ast}-\phi_{2}^{\ast}) \\
\hline
\vdots & \vdots & \ddots & \vdots \\
\hline
(\epsilon_{M1} \revision{\bm{I}_N})  \cos(\phi_{1}^{\ast}-\phi_{M}^{\ast}) & (\epsilon_{M2} \revision{\bm{I}_N})  \cos(\phi_{2}^{\ast}-\phi_{M}^{\ast}) & \cdots &\revision{\bm{J}_{\bm{A}_{\mathrm{intra}}}({\bm{\psi}}^{\ast}) - \mu_M \bm{I}_N}
\end{array}\right)
\end{equation}
or
\begin{equation}
\revision{\bm{J}(\bm{\theta}^{\ast}) = \bm{J}_{\bm{K}}(\bm{\theta}^{\ast}) = \bm{J}_{\bm{A}_{\mathrm{intra}}}(\bm{\psi}^{\ast}) \bigoplus \bm{J}_{\bm{A}_{\mathrm{inter}}}(\bm{\phi}^{\ast}).}
\end{equation}
\revision{As a direct consequence of this equality, we have the following proposition.}

\begin{prop} \label{prop:spectrum}
The spectrum of $\bm{J}_{\bm K}(\bm{\theta}^{\ast})$ can be defined in terms of:
\begin{itemize}
    \item The spectra of the matrix, $\bm{J}_{\bm{A}_{\mathrm{inter}}}(\bm{\phi^{\ast}})$.
    \item The spectra of the Jacobian, $\bm{J}_{\bm{A}_{\mathrm{intra}}}(\bm{\psi^{\ast}})$. 
\end{itemize}
Namely, 
\begin{equation}
    \Spec(\bm{J}_{\bm{K}}(\bm{\theta}^{\ast})) =  \Spec(\bm{J}_{\bm{A}_{\mathrm{intra}}}({\bm \psi}^{\ast}))+\Spec(\bm{J}_{\bm{A}_{\mathrm{inter}}})({\bm \phi}^{\ast}))  .
\end{equation}
\end{prop}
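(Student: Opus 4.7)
The plan is to recognize that the large matrix $\bm{J}_{\bm A}(\bm{\theta}^{\ast})$ written out in Eq. (\ref{eq:jacobian_multilayer_system}) has exactly the block structure of a Kronecker sum, and then invoke the spectral property of Kronecker sums already recalled at the end of Section \ref{sec:product}. Concretely, I would aim to prove the identity
\begin{equation}
\bm{J}_{\bm A}(\bm{\theta}^{\ast}) \;=\; \bm{J}_{\bm{A}_{\mathrm{intra}}}({\bm \psi}^{\ast}) \,\bigoplus\, \bm{J}_{\bm{A}_{\mathrm{inter}}}({\bm \phi}^{\ast}),
\end{equation}
after which the stated spectral decomposition is immediate from the fact that the spectrum of a Kronecker sum is the sumset of the spectra of its summands.

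First I would write out $\bm{J}_{\bm{A}_{\mathrm{inter}}}(\bm{\phi}^{\ast})$ explicitly. Applying the same computation that produced Eq. (\ref{eq:jacobian_reduced system}) to the single-layer system Eq. (\ref{eq:KM_inter}) gives an $M\times M$ matrix whose $(l,k)$ entry for $l\neq k$ is $\epsilon_{lk}\cos(\phi_k^{\ast}-\phi_l^{\ast})$ and whose $(l,l)$ entry is precisely $-c_l$, with $c_l = \sum_{k\neq l}\epsilon_{lk}\cos(\phi_k^{\ast}-\phi_l^{\ast})$ as defined in the paragraph preceding the proposition. This is the same matrix that appears as the block of scalar coefficients multiplying $\bm{I}_N$ in Eq. (\ref{eq:jacobian_multilayer_system}).

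Next I would verify the block identification directly. The matrix $\bm{J}_{\bm{A}_{\mathrm{inter}}}(\bm{\phi}^{\ast}) \otimes \bm{I}_N$ is the block matrix whose $(l,k)$ block equals $\bigl[\bm{J}_{\bm{A}_{\mathrm{inter}}}(\bm{\phi}^{\ast})\bigr]_{lk}\bm{I}_N$; the off-diagonal blocks are $\epsilon_{lk}\cos(\phi_k^{\ast}-\phi_l^{\ast})\bm{I}_N$ and the diagonal blocks are $-c_l\bm{I}_N$. The matrix $\bm{I}_M \otimes \bm{J}_{\bm{A}_{\mathrm{intra}}}(\bm{\psi}^{\ast})$ is block-diagonal with $\bm{J}_{\bm{A}_{\mathrm{intra}}}(\bm{\psi}^{\ast})$ on each diagonal block. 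Summing the two matches Eq. (\ref{eq:jacobian_multilayer_system}) block by block, which proves the Kronecker sum identity. Finally, applying the eigenvalue rule for Kronecker sums quoted in Section \ref{sec:product} yields the claimed description of $\Spec(\bm{J}_{\bm A}(\bm{\theta}^{\ast}))$.

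The main obstacle, and really the only nontrivial point, is confirming that the diagonal blocks line up: one has to see that the scalar shift $-c_l\bm{I}$ sitting next to $\bm{J}({\bm{\psi}}^{\ast})$ in the $(l,l)$ block of Eq. (\ref{eq:jacobian_multilayer_system}) is exactly the $(l,l)$ entry of the inter-layer Jacobian tensored with $\bm{I}_N$. Once this bookkeeping is done—which is essentially the observation that the Kuramoto Jacobian always has row-sum zero, so the diagonal shift in each block is precisely the inter-layer diagonal—the rest of the proof is a direct appeal to the Kronecker-sum spectral theorem and requires no further computation.
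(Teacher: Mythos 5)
Your proposal is correct and follows essentially the same route as the paper: the paper's entire argument is the one-line assertion that $\bm{J}_{\bm{A}}(\bm{\theta}^{\ast})=\bm{J}_{\bm{A}_{\mathrm{intra}}}(\bm{\psi}^{\ast}) \bigoplus \bm{J}_{\bm{A}_{\mathrm{inter}}}(\bm{\phi}^{\ast})$ combined with the Kronecker-sum spectral fact recalled in Sec.~\ref{sec:product}, and your block-by-block verification (including the check that the diagonal shift $-c_l$ is exactly the $(l,l)$ entry of the inter-layer Jacobian) simply supplies the details the paper leaves implicit.
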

\begin{proof}
\revision{This can be obtained using Eq. (\ref{eq:kronecker_sum_spec})} since $\bm{J}_{\bm{K}}(\bm{\theta}^{\ast})=\bm{J}_{\bm{A}_{\mathrm{intra}}}(\bm{\psi}^{\ast}) \bigoplus \bm{J}_{\bm{A}_{\mathrm{inter}}}(\bm{\phi}^{\ast})$.

\end{proof}

\begin{prop}\label{prop:linear stability}
The solution $\bm{\theta}^{\ast}$ for the multiplex network obtained through Eq. (\ref{eq:solution_multiplex_composed}) is linearly stable if and only if the solutions for the intra and inter-layer systems ${\bm {\psi}}^{\ast}$ and ${\bm \phi}^{\ast}$ are linearly stable. 
\end{prop}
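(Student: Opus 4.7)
The plan is to translate linear stability of each equilibrium into a spectral condition and then apply Proposition~\ref{prop:spectrum}. For any Kuramoto equilibrium, the phase-shift symmetry $\bm{\theta}\mapsto \bm{\theta}+c\bm{1}$ places the all-ones vector in the kernel of the Jacobian, so each of the matrices $\bm{J}_{\bm{A}}(\bm{\theta}^{\ast})$, $\bm{J}_{\bm{A}_{\mathrm{intra}}}(\bm{\psi}^{\ast})$, and $\bm{J}_{\bm{A}_{\mathrm{inter}}}(\bm{\phi}^{\ast})$ has $0$ as an eigenvalue. Linear stability (modulo this $S^{1}$-symmetry) will be understood in the standard sense that this zero is simple and every remaining eigenvalue has strictly negative real part.

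For the ($\Leftarrow$) direction, I assume that both component equilibria are linearly stable. Write $\Spec(\bm{J}_{\bm{A}_{\mathrm{intra}}}(\bm{\psi}^{\ast}))=\{\alpha_i\}_{i=1}^{N}$ and $\Spec(\bm{J}_{\bm{A}_{\mathrm{inter}}}(\bm{\phi}^{\ast}))=\{\beta_j\}_{j=1}^{M}$ with $\alpha_1=\beta_1=0$ the rotational modes and every other $\alpha_i,\beta_j$ of strictly negative real part. Proposition~\ref{prop:spectrum} identifies $\Spec(\bm{J}_{\bm{A}}(\bm{\theta}^{\ast}))$ with the multiset $\{\alpha_i+\beta_j\}_{i,j}$. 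The pair $(i,j)=(1,1)$ contributes the rotational zero of the multiplex system; for every other pair at least one summand has strictly negative real part while the other is non-positive, so $\mathrm{Re}(\alpha_i+\beta_j)<0$, establishing linear stability of $\bm{\theta}^{\ast}$.

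For the ($\Rightarrow$) direction, I argue by contraposition. If $\bm{\psi}^{\ast}$ is not linearly stable, then either $\bm{J}_{\bm{A}_{\mathrm{intra}}}(\bm{\psi}^{\ast})$ has an eigenvalue $\alpha\neq 0$ with $\mathrm{Re}(\alpha)\geq 0$, or its zero eigenvalue has multiplicity greater than one. In the first case, pairing $\alpha$ with the inter rotational mode $\beta_1=0$ produces an eigenvalue $\alpha$ of $\bm{J}_{\bm{A}}(\bm{\theta}^{\ast})$ which is nonzero with non-negative real part, so $\bm{\theta}^{\ast}$ is unstable. In the second case, an extra intra zero mode $v$ independent of $\bm{1}_N$ gives $v\otimes \bm{1}_M$ as a second zero-eigenvector of the composed Jacobian, again violating stability. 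The symmetric argument handles instability of $\bm{\phi}^{\ast}$.

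The main obstacle is the careful bookkeeping of zero eigenvalues on both sides of the equivalence: one must verify that the single unavoidable zero of $\bm{J}_{\bm{A}}(\bm{\theta}^{\ast})$, coming from the multiplex phase-shift symmetry, corresponds exactly to the pair of component rotational zeros under the Kronecker-sum decomposition. This reduces to the identity $\bm{1}_{N}\otimes \bm{1}_{M}=\bm{1}_{NM}$, which aligns the multiplicity counting in Proposition~\ref{prop:spectrum} with the $S^{1}$-symmetry of the full multiplex system and closes the argument in both directions.
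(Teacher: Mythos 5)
Your proof is correct and takes essentially the same route as the paper: both directions rest on the Kronecker-sum spectral identity of Proposition~\ref{prop:spectrum}, pairing eigenvalues of the intra- and inter-layer Jacobians and tracking the rotational zero mode. If anything, your bookkeeping is slightly more careful than the paper's terse argument, since you explicitly handle the multiplicity of the zero eigenvalue and run the converse by contraposition.
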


\begin{proof}
Assume that ${\bm{\psi}}^{\ast}$ and ${\bm{\phi}}^{\ast}$are linearly stable, then $\bm{J}_{\bm{A}_{\mathrm{intra}}}({\bm{\psi}}^{\ast})$ and $\bm{J}_{\bm{A}_{\mathrm{inter}}}({\bm{\phi}}^{\ast})$ are symmetric negative-semidefinite. That is, all the eigenvalues of $\bm{J}_{\bm{A}_{\mathrm{intra}}}({\bm{\psi}}^{\ast})$ and $\bm{J}_{\bm{A}_{\mathrm{inter}}}({\bm{\phi}}^{\ast})$, except 0, are negative. So, all the eigenvalues of $\bm{J}_{\bm{K}}(\bm{\theta}^{\ast})$, except 0, are negative as the sum of negative numbers is negative. Hence, $\bm{\theta}^{\ast}$ is linearly stable.

Conversely, if the composed solution $\bm{\theta}^{\ast}$ is linearly stable then $0$ is an eigenvalue of $\bm{J}_{\bm{K}}(\bm{\theta}^{\ast})$ with multiplicity $1$ and all other eigenvalues must be negative. From this, we can conclude that all the eigenvalues of $\bm{J}_{\bm{A}_{\mathrm{intra}}}({\bm{\psi}}^{\ast})$ and $\bm{J}_{\bm{A}_{\mathrm{inter}}}({\bm{\phi}}^{\ast})$, except 0, are negative. This shows that ${\bm {\psi}}^{\ast}$ and ${\bm \phi}^{\ast}$ are linearly stable. 
\end{proof}

\section{Numerical simulations and applications}\label{sec:simulations}

\begin{figure*}[b]
    \centering
    \includegraphics[width=0.9\textwidth]{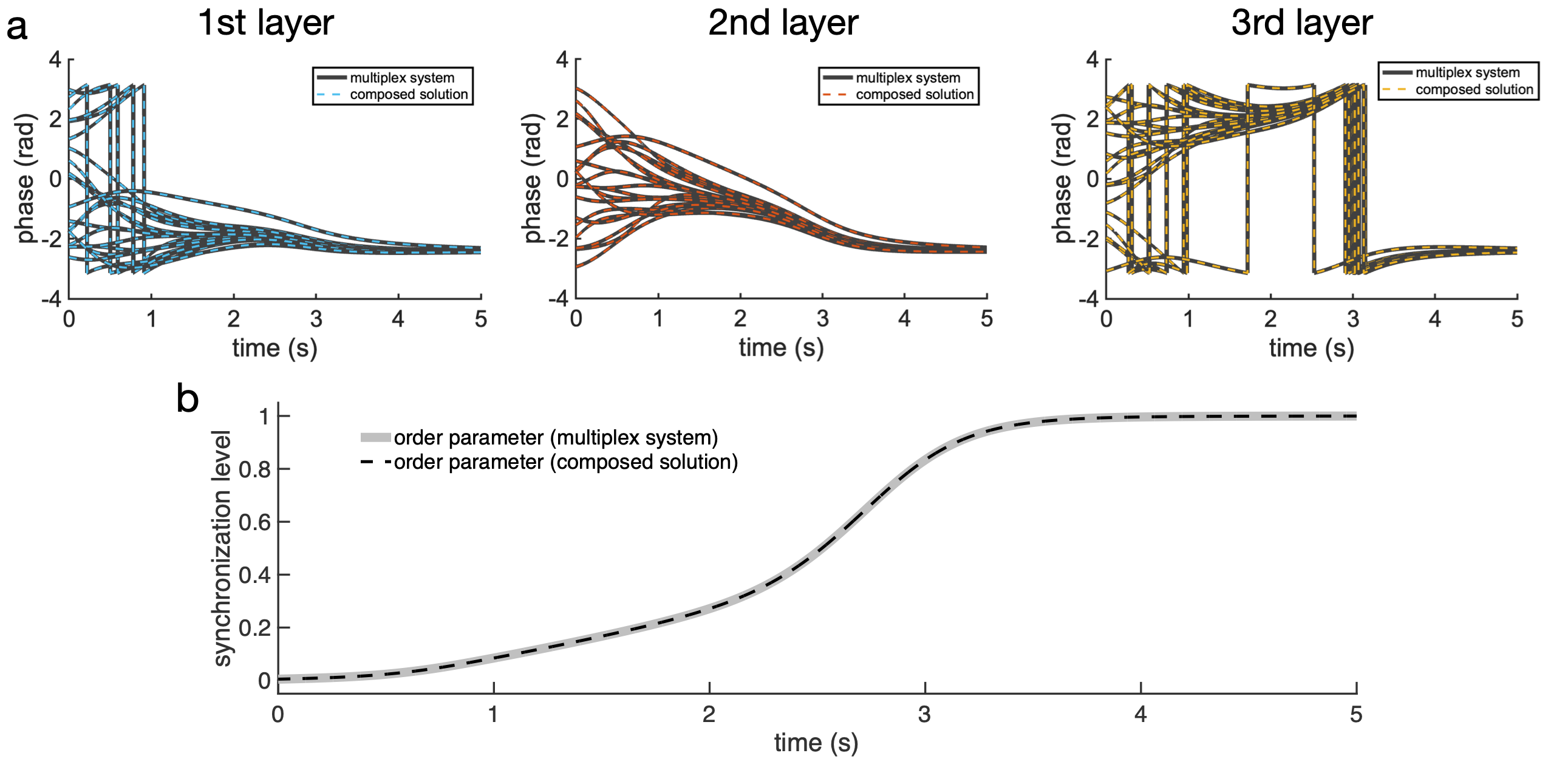}
    \caption{\textbf{The composed solution allows us to study the dynamics of the multiplex network.} We consider a multiplex network composed of $M = 3$ layers each one with $N = 20$ oscillators. The initial conditions for the oscillators are given by random phases $\mathcal{U}(-\pi,\pi)$. \textbf{(a)} We show the trajectories of the multiplex system for the oscillators in each layer (black solid lines), which are obtained through the integration of Eq. (\ref{eq:KM_multiplex}). These trajectories have a perfect match with the one obtained through the composed solution Eq. (\ref{eq:solution_multiplex_composed}), represented here by the colored dashed lines (equivalent system). \textbf{(b)} We also show the synchronization level of the multiplex network, which is obtained directly through Eq. (\ref{eq:order_parameter_multilayer}) (solid gray line), and also through the composed solution Eq. (\ref{eq:order_parameter_composed}) (black dashed line).}
    \label{fig:trajectories_multiplex}
\end{figure*}
To highlight the applicability of the mathematical approach we introduce in this paper to study multiplex networks of nonlinear oscillators, we present here several examples and numerical simulations. \revision{In our examples, the inter-layer connectivity is given by a first-neighbors connection (or a $k$-regular graph with $k = 1$). This can be considered as a special case of our approach introduced in Sections \ref{sec:kuramoto_multiplex}, \ref{sec:composed solution} and \ref{sec:stability_analysis}, in which we consider $\epsilon_{lk} = 0$ for $|l-k| \neq 1$ in Eq. (\ref{eq:matrix_off-diag_block}) and  Eq. (\ref{eq:matrix_inter_system}).  The intra-layer connectivity varies across examples from random networks to $k$-regular graphs. A list of all the relevant parameters for the numerical simulations can be found in the Appendix.} We first consider a multiplex system composed of $M = 3$ layers with $N = 20$ oscillators each. In this simple example, we consider all oscillators with the same natural frequency and represent their motion in the rotating framework, i.e. $\omega = 0$. The initial state for the multiplex network is given by random phases in $\theta_{i}(0) \in \mathcal{U}(-\pi,\pi), i \in [1,NM]$. We then consider the intra-layer and the inter-layer systems, as described before, and use Prop. \ref{prop:composed solution} through Eq. (\ref{eq:solution_multiplex_composed}) to compose the initial conditions for one. With this, we can study the dynamics of these two smaller systems to learn about the dynamical behavior of the multiplex network. Figure \ref{fig:trajectories_multiplex}a shows the trajectories of the oscillators in the multiplex system (black solid lines) in each layer which are obtained directly through the numerical integration of Eq. (\ref{eq:KM_multiplex}) representing the multiplex system. We observe that these trajectories perfectly match the trajectories given by the composed solution or equivalent system (colored dashed lines), which are obtained through Eqs. (\ref{eq:KM_intra}), (\ref{eq:KM_inter}) and (\ref{eq:solution_multiplex_composed}). In this case, we show the trajectories of oscillators in each layer separately to help with the visualization, but we highlight that Eq. (\ref{eq:solution_multiplex_composed}) defining the composed solution is general and establishes the correspondence with the multiplex system. Here, we observe that the oscillators in all layers start at random phases and asynchronous states, due to the intra-layer and inter-layer coupling, though, the oscillators evolve to a common phase, characterizing phase synchronization.

We also analyze the dynamics of the multiplex network by using the Kuramoto order parameter, which gives us the level of phase synchronization of the system. We first use Eq. (\ref{eq:order_parameter_multilayer}) to obtain directly the order parameter through the phases $\theta_{i}(t)$, which are obtained through the integration of Eq. (\ref{eq:KM_multiplex}). This is represented by the gray solid line (multiplex system) in Fig. \ref{fig:trajectories_multiplex}b. We also obtain the Kuramoto order parameter with the composed solution (black dashed line). In this case, we can use Prop. \ref{prop:order_parameter} through Eq. (\ref{eq:order_parameter_composed}) to obtain the synchronization level of the whole system, but using information about the smaller ones, i.e. intra and inter-layer systems. We observe a complete match between these two lines, representing the transition from an asynchronous state to phase synchronization.

To further explore the dynamical behavior of multiplex networks and the use of the composed solution, we now consider the case where the natural frequency of oscillators is no longer zero. Figure \ref{fig:example_natural frequency}a shows the case where all oscillators have the same natural frequency. We then use the same procedure introduced in the previous figure and start the system with random initial conditions. We evaluate the synchronization level of the network, which is obtained directly through Eq.(\ref{eq:order_parameter_multilayer}) (gray solid line) and also using the composed solution Eq. (\ref{eq:order_parameter_composed}) (black dashed line), where we observe the network is asynchronous in the beginning, but it evolves to phase synchronization. We also analyze the spatiotemporal dynamics of the system, which is represented by the phases $\theta_{i}(t)$ plotted in color-code as a function of time and node index. We observe that, at first, each layer is desynchronized, but as time evolves emerge of phase synchronization is represented by the horizontal lines.
\begin{figure*}[htb]
    \centering
    \includegraphics[width=0.9\textwidth]{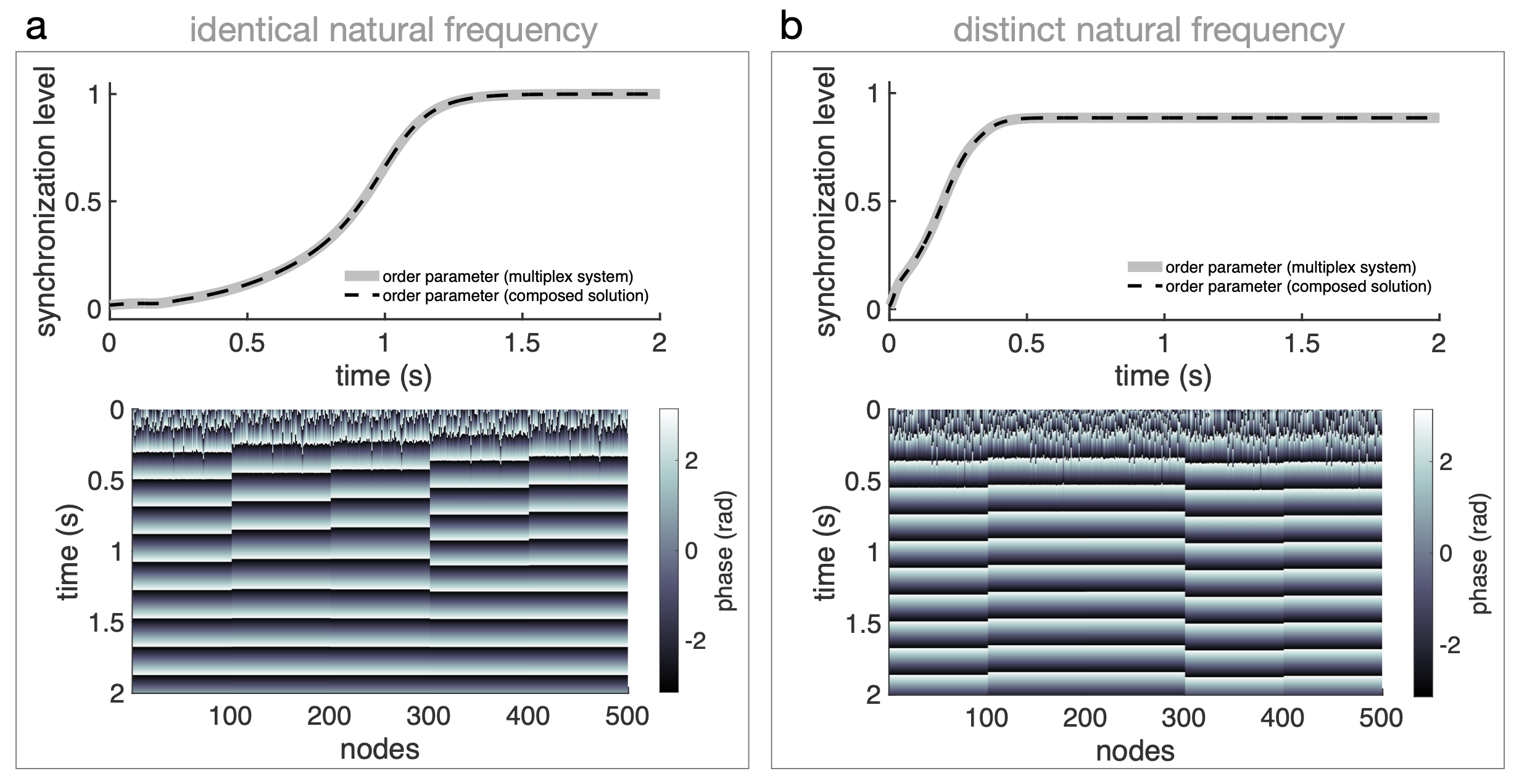}
    \caption{\textbf{Oscillation patterns in multiplex networks of Kuramoto oscillators.} \textbf{(a)} We first consider the case where all oscillators have the same, but non-zero natural frequency. We can measure the synchronization level using the order parameter obtained directly through Eq. (\ref{eq:order_parameter_multilayer}) -- gray line -- and also using the composed solution through Eq. (\ref{eq:order_parameter_composed}) -- black dashed line. The network starts in random initial phases and evolves to phase synchronization, which can be also observed in the spatiotemporal patterns given by the emergence of horizontal lines. \textbf{(b)} We also consider the case where oscillators in different layers have distinct natural frequencies. We observe that our approach produces a perfect match with the direct analysis of the multiplex network. Also, in this case, each layer is phase synchronized internally, but the multiplex system as a whole does not reach phase synchronization due to the distinct natural frequency, as observed in the spatiotemporal dynamics. }
    \label{fig:example_natural frequency}
\end{figure*}

Figure \ref{fig:example_natural frequency}b shows the case where the oscillators in different layers have distinct natural frequencies. In this more complicated case, our approach described in Prop \ref{prop:composed solution} through Eq. (\ref{eq:solution_multiplex_composed}) is still valid and we study the Kuramoto order parameter of the multiplex network (gray solid line) using the composed solution (black dashed line) \revisiontwo{-- see Remark~\ref{rem:natural_frequency}}. Again, due to random initial conditions, the systems start in an asynchronous state. Due to the intra and inter-layer couplings, however, the networks exhibit higher levels of coherence and synchronization. In this case, though, because the natural frequencies are distinct the order parameter does not reach one, and the multiplex network as a whole does not reach phase synchronization. We observe the spatiotemporal dynamics in this case and notice that each layer evolves to an individual state of phase synchronization within a given layer. Because each layer is oscillating at a different frequency, the system as a whole does not evolve to a common phase. It is important to emphasize that our approach offers a perfect match in this case as well. \revision{At this point, it is important to emphasize that the previous result depicted in Fig. \ref{fig:trajectories_multiplex} highlights the application of Prop. \ref{prop:composed solution}, which can be applied to oscillators with a homogeneous natural frequency. At the same time, the results in Fig. \ref{fig:example_natural frequency} show that we can apply Prop. \ref{prop:order_parameter} even to networks with heterogeneous natural frequencies in different layers, since the rotation speed of an oscillator does not affect the Kuramoto order parameter, which is defined by the difference in phase only.}

\begin{figure*}[t!]
    \centering
    \includegraphics[width=0.75\textwidth]{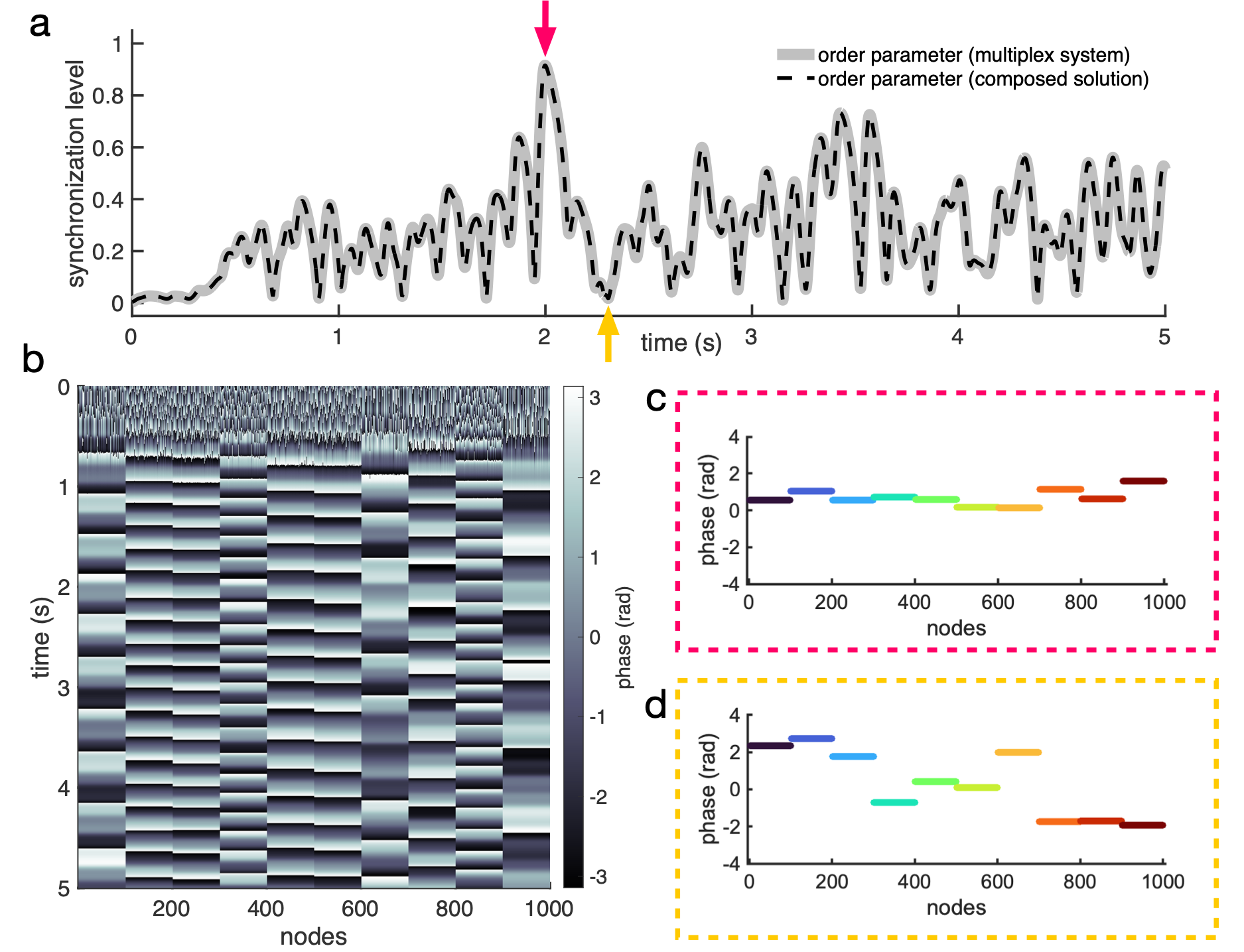}
    \caption{\textbf{Multiplex networks of oscillators with heterogeneous natural frequency display rich dynamics.} We consider a large multiplex network composed of $M = 10$ layers each one with $N = 100$ oscillators. In this case, the oscillators in different layers have distinct natural frequencies. \textbf{(a)} We observe that the synchronization level of the multiplex system no longer monotonically transitions to unity, but rather exhibits sophisticated dynamics, where the system transitions between different levels of coherence. Further, the order parameter obtained directly through Eq. (\ref{eq:order_parameter_multilayer}) depicts a perfect match with the one obtained using Prop. \ref{prop:order_parameter} through Eq. (\ref{eq:order_parameter_composed}). \textbf{(b)} We observe that each layer transitions to phase synchronization individually, but no phase synchronization is observed in the multiplex network as a whole. In this case, the network transitions between lower and higher levels of coherence among layers, where at specific points \textbf{(c)} the layers are in sync (pink arrow) \textbf{(d)} but it quickly transitions to a state with a low level of coherence (yellow arrow).}
    \label{fig:distinct_natural_frequency}
\end{figure*}
Multiplex networks composed of Kuramoto oscillators with heterogeneous natural frequency can display a rich diversity of synchronization patterns, and the approach we introduce in this paper is able to offer a simplified way to analyze them. To exemplify this point, we consider a larger multiplex network composed of $M = 10$ layers, each one with $N = 100$ oscillators. In this case, oscillators within a given layer have the same natural frequency, but are distinct from oscillators in another layer. As considered before, we can use Prop. \ref{prop:composed solution} to obtain the initial conditions for the multiplex, intra-layer and inter-layer networks in a way that we can use the smaller systems to obtain information about the multiplex one. Here, we use this proposition with random initial conditions $\mathcal{U}(-\pi, \pi)$. We then numerically integrate the dynamics of the multiplex network using Eq. (\ref{eq:KM_multiplex}), which allows us to obtain the Kuramoto order parameter for the whole system using directly Eq. (\ref{eq:order_parameter_multilayer}) (Fig. \ref{fig:distinct_natural_frequency}a, gray solid line). As we discussed before, we can use Prop. \ref{prop:order_parameter} through Eq. (\ref{eq:order_parameter_composed}) to obtain the level of synchronization that the multiplex through the composed solution (Fig. \ref{fig:distinct_natural_frequency}a, black dashed line), which perfectly matches the result obtained directly through the integration of the large multiplex system. In this case, we observe that, due to the interplay between the intra and inter-layer couplings and distinct natural frequencies, the dynamical behavior of the multiplex network is quite rich, and the order parameter does not increase monotonically to one. Instead, the synchronization level of the system is in constant change, increasing and decreasing in a sophisticated manner. 

Further, Fig. \ref{fig:distinct_natural_frequency}b shows the spatiotemporal dynamics of the multiplex system, where we can observe that each layer evolves to a phase synchronized state, but due to the distinct natural frequency of oscillation, the layers transition between higher and lower levels of coherence. This can be better appreciated in Figs. \ref{fig:distinct_natural_frequency}c and \ref{fig:distinct_natural_frequency}d, where we plot the phases of oscillators in each layer and consider two different moments. First, when the synchronization level of the multiplex network is high (pink arrow), we can observe that the layers have a high degree of coherence (Fig. \ref{fig:distinct_natural_frequency}c). However, when the level of synchronization of the whole system is low (yellow arrow), we can observe that layers are not synchronized (Fig. \ref{fig:distinct_natural_frequency}d).
\begin{figure*}[b!]
    \centering
    \includegraphics[width=\textwidth]{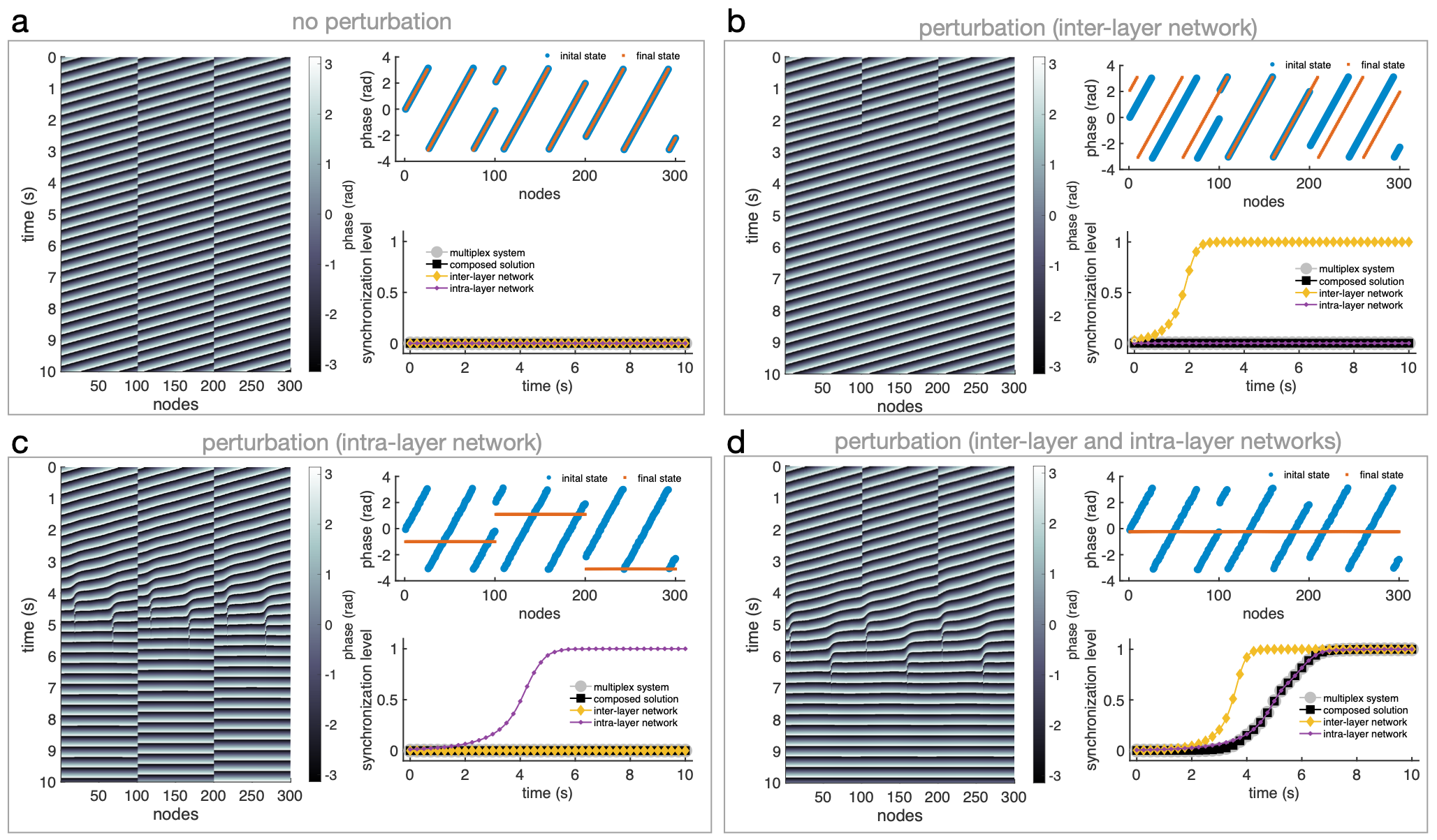}
    \caption{\textbf{Composing solutions for multiplex networks.} We use Prop. \ref{prop:composed solution} through Eq. (\ref{eq:solution_multiplex_composed}) to compose solutions for the multiplex system with $M = 3$ layers each one with $N = 100$ oscillators. Here, we consider individual solutions for the inter and intra-layer networks given by Eq. (\ref{eq:twisted_states}) with $p = 1$ and $p = 2$, respectively. \textbf{(a)} In the first example, without any perturbation, the system does change its pattern when we use this solution as the initial state. \textbf{(b)} We then consider perturbation on the inter-layer solution. In this case, the multiplex network transitions to a different twisted state, which is still a solution for the system. The inter-layer network transitions to phase synchronization due to perturbation. \textbf{(c)} When we perturb the solution of the intra-layer system, we observe a different transition in the spatiotemporal dynamics. In this case, each layer transition to phase synchronization, but on a different phase, which characterizes a new solution for the multiplex network with an order parameter equal to zero. \textbf{(d)} At last, we consider perturbation on the solution for the inter and intra-layer systems. In this case, the multiplex network as a whole transitions to phase synchronization, and the order parameter reaches one.}
    \label{fig:solutions_stability}
\end{figure*}

So far, we observe that by using the intra and inter-layer systems, we can study the dynamics of the respective multiplex network. In the cases presented in the previous figures, we start with random initial conditions and study the emergence of patterns of oscillations and synchronization. At the same time, however, we can use Prop \ref{prop:composed solution} through Eq. (\ref{eq:solution_multiplex_composed}) to find possible solutions for the multiplex network of Kuramoto oscillators. To do so, we can find solutions for the systems described by Kuramoto oscillators on the inter-layer and intra-layer matrices, and then use Eq. (\ref{eq:solution_multiplex_composed}) to compose a solution for the multiplex network. Further, we can then use Prop. \ref{prop:linear stability} to study the linear stability of these solutions, by considering the linear stability of the solutions for the inter and intra-layer networks. To exemplify this point, we consider multiplex networks where the intra-layer connection is given by a ring graph with periodic boundary conditions. In this case, possible solutions for the inter and intra-layer system can be expressed as:
\begin{equation}
    \bm{\theta}^{(p)}=\left(0,  \frac{-2 \pi p}{\mathcal{N}}, \cdots, \frac{-2 \pi p (\mathcal{N}-1)}{\mathcal{N}} \right),
\label{eq:twisted_states}
\end{equation}
where $\mathcal{N}$ is the number of oscillators, being $\mathcal{N} = M$ for the inter-layer network and $\mathcal{N} = N$ for the intra-layer network, and $p$ defines different solutions, where $p = 0$ represents phase synchronization, and $0<p\leq\sfrac{\mathcal{N}}{2}$ represents different phase-locking or twisted states. As an example, we consider the solutions for the inter and intra-layer networks using Eq. (\ref{eq:twisted_states}) with $p = 1$ and $p = 2$, respectively. In this case, we consider a multiplex network with $M = 3$ layers each one composed of $N = 100$ oscillators with the same natural frequency. With this, we have the solutions for the inter and intra-layer networks, $\bm{\phi}^{\ast}$ and $\bm{\psi}^{\ast}$, respectively. We then use Eq. (\ref{eq:solution_multiplex_composed}) to compose the solution for the multiplex network and use it as the initial condition for the numerical simulation. Figure \ref{fig:solutions_stability}a shows the numerical results for this system, where we observe that the spatiotemporal dynamics (left) stay the same as time evolves since the obtained phase configuration (upper right) is a solution for the multiplex network. This can be also appreciated by the evaluation of the Kuramoto order parameter for each network (lower right), where it is zero for the whole simulation. Here, the order parameter for the multiplex network is obtained directly through Eq. (\ref{eq:order_parameter_multilayer}), and the order parameter for the composed solution is given by Prop. \ref{prop:order_parameter} through Eq. (\ref{eq:order_parameter_composed}), the order parameter for the inter-layer system is given by Eq.(\ref{eq:order_parameter_inter}), and for the intra-layer network by Eq. (\ref{eq:order_parameter_intra}).

We can now consider perturbations over these solutions and study the response of the system. The perturbations are applied to each element of the vector representing the solution in Eq. (\ref{eq:twisted_states}). Mathematically, the perturbation is given by $\eta\mathcal{U}(-\pi, \pi)$, where $\eta = 0.025$ is the amplitude. After adding the perturbation to the solution given by Eq. (\ref{eq:twisted_states}), we wrap the phase pattern again in the interval of $[-\pi,\pi)$ to facilitate the visualization. We again consider the solutions for the inter and intra-layer networks using Eq. (\ref{eq:twisted_states}) with $p = 1$ and $p = 2$, respectively, with $M = 3$ layers each one composed of $N = 100$ oscillators. In Fig. \ref{fig:solutions_stability}b, we consider the case where we apply the perturbation to the solution of the inter-layer network. In a similar way observed before, the spatiotemporal dynamics (left) starts with each layer in distinct phase-locking states, but due to the perturbation on the inter-layer system, the multiplex network transitions to a different phase-locking state (upper right). This new state is still a solution for the multiplex system, and the order parameter for the multiplex network and for the composed solution does not change and is zero for the whole simulation (lower right). Interestingly, the order parameter for the inter-layer network transitions to one due to the perturbation, since the twisted state for this system is not stable ($M = 3$). The order parameter for the intra-layer networks does not change, otherwise.

Going further, we consider the case with perturbation on the intra-layer system (Fig. \ref{fig:solutions_stability}c). In this case, we observe a different transition in the spatiotemporal dynamics (left). Due to the perturbation in the solution for the intra-layer system, we can appreciate a small deviation in the initial state in comparison to the previous cases (upper right). Interestingly, because the perturbation occurs on the intra-layer level, each layer transition to phase synchronization, and the order parameter for the intra-layer system transition to one (lower right). At the same time, because the solution for the multiplex network is composed in combination with the inter-layer solution, the multiplex network as a whole does not transition to a common phase, and the order parameter of the multiplex network, as well as, of the composed solution, remains zero (lower right). In this case, the phase pattern for the whole system is now given by individual phase synchronized layers in different phases (upper right). At last, we consider the case where the perturbation is applied to the inter and intra-layer solutions (Fig. \ref{fig:solutions_stability}d). Under these conditions, the system starts with the combination of twisted states, but the whole multiplex network transitions to phase synchronization (left, upper right). This can be also observed using the order parameter, where the order parameter for the multiplex network, composed solution, inter-layer, and intra-layer networks transitions to one (lower right). We emphasize the correspondence between the order parameter obtained directly through Eq. (\ref{eq:order_parameter_multilayer}) for the multiplex network, and through the composed solution using Eq. (\ref{eq:order_parameter_composed}).

\section{Discussions and conclusions}\label{sec:discussion_conclusion}

Networked systems have been extensively studied in recent years, where many direct applications for real-world problems are possible. In this paper, we have studied multiplex networks, which are a class of multilevel systems. There has been great interest in this class of networks recently \cite{kivela2014multilayer,salehi2015spreading,menichetti2016control,de2016physics,aleta2019multilayer,bassett2017network,yuvaraj2021topological,gambuzza2015intra,leyva2018relay,leyva2017inter,singh2015synchronization,schulen2021solitary,gao2012networks,ji2023signal}. At the same time, these networks have an intrinsic difficulty regarding mathematical and analytical approaches due to the presence of two different scales of coupling. A common technique used to study these systems consists in dividing the whole, multilevel system into smaller representations of the system, e.g. the intra and inter-coupling, which represent the coupling within a layer and between layers, respectively. In this regard, many researchers have used this kind of approach to study multilayer networks \cite{kivela2014multilayer,de2013mathematical,boccaletti2014structure,gao2012networks,berner2021multiplex}. 

In this paper, we have contributed to this discussion by introducing an approach to multiplex networks of Kuramoto oscillators. This framework allows us to describe the oscillation patterns observed in coupled Kuramoto oscillators on a multiplex network by studying two smaller, simpler systems of Kuramoto oscillators related to the intra and inter-coupling structures. With this in mind, we can compose solutions and oscillation patterns in multiplex networks of Kuramoto oscillators in a straightforward way, as shown in Sec. \ref{sec:composed solution}. This approach allows us to (i) study the dynamics of a whole multiplex system of oscillators by analyzing the simpler systems represented by the intra and inter-couplings, and (ii) find equilibrium points for a sophisticated multiplex network by composing a solution based on the smaller intra and inter systems. This mathematical approach is defined in Props. \ref{prop:composed solution} and \ref{prop:order_parameter}, which have been explored in numerical simulations, as shown in Figs. \ref{fig:trajectories_multiplex}, \ref{fig:example_natural frequency}, and \ref{fig:distinct_natural_frequency}. Furthermore, we can use a similar idea to extend the approach to study the linear stability of solutions in the multiplex network of Kuramoto oscillators. Because we can represent and study the dynamical behavior of a multiplex network using the intra and inter-coupling systems, we can obtain information about the spectral properties of the Jacobian related to the multiplex system by studying the Jacobian of the smaller systems. With this, we can analyze the linear stability of the solutions for the intra and inter-coupling system, and then obtain information about the linear stability of the composed solution on the multiplex network. This is represented in Prop. \ref{prop:linear stability} and studied numerically in Fig. \ref{fig:solutions_stability}. \revision{Importantly, this mathematical approach naturally generalizes for multiplex networks of any size (considering the number of oscillators in each layer and/or the number of layers).}

The framework and results introduced in this paper contribute to the study of the dynamical behavior of networks with different levels of connection. Particularly, we have recently introduced a similar approach to multilayer networks \cite{nguyen2023broadcasting}, which allows us to study the dynamics of multilayer networks using a reduced representation. Moreover, a similar approach has been introduced recently for the study of multilayer networks, where the reduced representations of multilevel networks are used to study the master stability function of multilayer networks with application in neural networks \cite{berner2021multiplex}. Here, we have used similar ideas to extend the study of Kuramoto oscillators on multiplex networks, an important dynamical system that has extensively studied \cite{kumar2021explosive,jalan2019explosive,frolov2018macroscopic,khanra2018explosive,zhang2015explosive}. \revision{The framework we introduce here differs from previous work due to the fact we can apply it directly to oscillators with distinct natural frequencies (Figs. \ref{fig:example_natural frequency} and \ref{fig:distinct_natural_frequency}). In addition to that, this approach allows us to compose specific solutions for multiplex networks, and also to study a diversity of spatiotemporal patterns in these networks.} \revision{It is important to notice that our approach naturally generalizes for systems with different coupling functions than the sine function as described in the Kuramoto model.} Further, our approach can be extended in the future in light of a new perspective for nonlinear oscillator networks \cite{budzinski2022geometry,nguyen_equilibria_2023,budzinski2023analytical}, where analytical and geometric insights are available for the emergence of synchronization phenomena. The development of different mathematical approaches for networked systems is of great importance due to its direct applicability to study many problems. For instance, many problems in neuroscience can be modeled and studied using multilayer networks \cite{palmigiano2017flexible,avena2018communication,gosak2018network}, where the emergence of sophisticated spatiotemporal patterns plays an important role \cite{muller2018cortical,palmigiano2017flexible}. The study of these networks thus opens the possibility to develop further analyses of computations performed with these systems. 

\begin{acknowledgments}
We thank Alex Busch for helping with the illustrations. This work was supported by BrainsCAN at Western University through the Canada First Research Excellence Fund (CFREF), the NSF through a NeuroNex award (\#2015276), the Natural Sciences and Engineering Research Council of Canada (NSERC) grant R0370A01, Compute Ontario (computeontario.ca), Digital Research Alliance of Canada (alliancecan.ca), and the Western Academy for Advanced Research. R.C.B gratefully acknowledges the Western Institute for Neuroscience Clinical Research Postdoctoral Fellowship. 
\end{acknowledgments}

\section*{Code availability}

An open-source code repository for this work is available on GitHub: \href{http://mullerlab.github.io}{\textcolor{Cerulean}{http://mullerlab.github.io}}.



\section*{Appendix - computational details}

Numerical integration was performed with the Euler method with a time step given by $dt = 0.001$. The network structure is given as follows (if not stated otherwise): the connection scheme within each layer is given by an undirected Erdös-Rényi graph with a probability of $0.25$; the connection scheme between layers is given by a $k$-regular graph with $k = 1$. The parameters used in the simulations depicted in each figure are listed in Table \ref{table_parameters}.
\begin{table}[htb]
    \centering
    \caption{Relevant parameters for the computational analyses.}
    \begin{tabular}{l  l}
         \hline \hline
        Figure 2 & $M = 3$, $N = 20$, $\epsilon_{\mathrm{intra}} = 1$ and $\epsilon_{\mathrm{inter}} = 0.5$ \\
        Figure 3a & $M = 5$, $N = 100$, $\epsilon_{\mathrm{intra}} = 1$ $\epsilon_{\mathrm{inter}} = 3$ \\
        Figure 3b & $M = 5$, $N = 100$, $\epsilon_{\mathrm{intra}} = 1$ $\epsilon_{\mathrm{inter}} = 20$ \\
        Figure 4 & $M = 10$, $N = 100$, $\epsilon_{\mathrm{intra}} = 0.75$, $\epsilon_{\mathrm{inter}} = 10$ \\
        \multirow{2}{*}{Figure 5} & $M = 3$, $N = 100$, $\epsilon_{\mathrm{intra}} = 0.10$, $\epsilon_{\mathrm{inter}} = 1$ \\
         & $\bm{A}_{\mathrm{intra}}$ is given by $k$-regular graph with $k = 20$ \\
        \hline \hline
        \end{tabular}
        \label{table_parameters}
\end{table}


%

\end{document}